
\documentclass{amsproc}
\usepackage{eurosym}
\usepackage{amssymb}
\usepackage{amsfonts}
\usepackage[utf8]{inputenc}
\usepackage[OT4]{fontenc}

\setcounter{MaxMatrixCols}{10}

\newcommand{\field}[1]{\mathbb{#1}}
\newcommand{\C}{\field{C}}
\newcommand{\K}{\field{K}}

\newcommand{\R}{\field{R}}

\theoremstyle{plain}
\numberwithin{equation}{section}

\newtheorem{theorem}{Theorem}[section]

\newtheorem{lemma}[theorem]{Lemma}
\newtheorem{definition}[theorem]{Definition}

\newtheorem{remark}[theorem]{Remark}
\newtheorem{example}[theorem]{Example}
\newtheorem{proposition}[theorem]{Proposition}
\newtheorem{corollary}[theorem]{Corollary}

\DeclareMathOperator{\dime}{dim}
\DeclareMathOperator{\graph}{graph}

\def\p{\mathbb{P}}

\makeatletter

\@addtoreset{equation}{section} \makeatother
\author[Zbigniew Jelonek]{Zbigniew Jelonek}
\author[Micha{\l} Laso\'n]{Micha{\l} Laso\'n}

\address[Z. Jelonek]{Institute of Mathematics of the Polish Academy of Sciences, ul.\'{S}niadeckich 8, 00-656 Warszawa, Poland}
\email{najelone@cyf-kr.edu.pl}
\address[M. Laso\'n]{Institute of Mathematics of the Polish Academy of Sciences, ul.\'{S}niadeckich 8, 00-656 Warszawa, Poland}
\email{michalason@gmail.com}

\keywords{affine variety, semialgebraic set, the set of non-properness, parametric curve, degree of a curve}
\subjclass{14R25, 14P10, 14R99.}
\thanks{Z. Jelonek was supported by Polish National Science Centre grant no. 2013/09/B/ST1/04162. M. Laso{\'n} was supported by the Polish Ministry of Science and Higher Education Iuventus Plus grant no. 0382/IP3/2013/72.}

\date{\today}

\begin{document}

\title[]{Quantitative properties of the non-properness set of a polynomial map}

\begin{abstract}
Let $f$ be a generically finite polynomial map $f: \C^n\to \C^m$ of algebraic degree $d$. Motivated by the study of the Jacobian Conjecture, we prove that the set $S_f$ of non-properness of  $f$ is covered by parametric curves of degree at most $d-1$. This bound is best possible.

Moreover, we prove
that if $X\subset\R^n$ is a closed algebraic set covered by
parametric curves, and $f: X\rightarrow\R^m$ is a generically
finite polynomial map, then the set $S_f$ of non-properness of $f$ is also covered by
parametric curves. Moreover, if $X$ is covered by parametric
curves of degree at most $d_1$, and the map $f$ has degree $d_2$,
then the set $S_f$ is covered by parametric curves of degree at
most $2d_1d_2$.

As an application of this result we show a real version of the Bia\l ynicki-Birula theorem: Let $G$ be a real, non-trivial, connected, unipotent group which acts effectively and polynomially on a connected smooth algebraic variety $X\subset\R^n$. Then  the set $Fix(G)$ of fixed points  has no isolated points.
\end{abstract}

\maketitle

\section{Introduction}

Let $f:X\rightarrow Y$ be a generically finite polynomial map between affine varieties.

\begin{definition}
We say that $f$ is \emph{proper at a point $y\in Y$} if
there exists an open neighborhood $U$ of $y$ such that
$f\vert_{f^{-1}(U)}:f^{-1}  (U)\rightarrow U$ is a proper map. The set of points at which $f$ is not proper is denoted by $S_f$.
\end{definition}


The set $S_f$ was first introduced by the first author in \cite{jel} (see also \cite{jel1,jel21}). It is a good measure of non-properness of the map $f$, and it has interesting applications in pure and applied mathematics \cite{jel3,ha,stas}. The first author proved the following property of the set $S_f$ when the base field is $\C$.

\begin{theorem}[Theorem 4.1 \cite{jel5}]\label{hiper}
Let $X$ be an affine variety over $\C$, and let $f:X\rightarrow\C^m$ be a generically finite polynomial map. If $X$ is $\C$-uniruled (covered by polynomially parametric curves), then the set $S_f$ is also $\C$-uniruled.
\end{theorem}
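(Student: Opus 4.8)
The plan is to describe $S_f$ geometrically through a compactification and then to treat the two assertions separately. First I would fix a projective closure $\bar X\subset\p^N$ of $X$, set $Y:=\overline{f(X)}$ with projective closure $\bar Y\subset\p^m$, and replace $f$ by the closure $\bar\Gamma$ of its graph inside $\bar X\times\p^m$, with the two projections $p\colon\bar\Gamma\to\bar X$ and $q\colon\bar\Gamma\to\p^m$. Since $p$ restricts to an isomorphism over $X$, it is birational and proper, so $\bar\Gamma$ is irreducible of dimension $n:=\dim X=\dim Y$. Writing $\partial X=\bar X\setminus X$ and $\bar\Gamma_\infty:=p^{-1}(\partial X)$, the geometric description of non-properness gives the identity $S_f=q(\bar\Gamma_\infty)\cap\C^m$: a point $y\in Y$ fails to be proper for $f$ exactly when there is an arc in $X$ escaping to infinity whose image converges to $y$, equivalently a point of $\bar\Gamma$ lying over $\partial X$ and over $y$. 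This identity is the common starting point for both parts.

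For part (1) the dimension count is immediate. As $\partial X$ is pure of dimension $n-1$ and $p$ is birational and proper, the locus $\bar\Gamma_\infty$ again has dimension $n-1$, whence $\dim q(\bar\Gamma_\infty)\le n-1$ and $S_f$ has codimension at least one in $Y$. The substantial point is \emph{purity}: that $S_f$ is either empty or of pure codimension one. On the locus where $f$ is quasi-finite I would obtain this from Zariski's Main Theorem applied to the normalization $Z$ of $Y$ in the function field $\C(X)$, factoring $f$ as $X\to Z\xrightarrow{\,h\,}Y$ with $h$ finite, so that the non-proper points are the image under the codimension-preserving map $h$ of the missing points of $Z$; the remaining contribution, coming from the positive-dimensional fibers of $f$, must be analyzed separately and also shown to be divisorial. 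Ruling out components of codimension $\ge 2$ is the delicate technical step here.

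For part (2) assume $X$ is $\C$-uniruled, so there is a dominant family $\Phi\colon W\times\C\to X$ whose lines are polynomial curves. The goal is to cover $\overline{S_f}=q(\bar\Gamma_\infty)$ by \emph{polynomial} curves, that is, by images of $\C$ meeting the hyperplane at infinity $H:=\p^m\setminus\C^m$ in a single point. I would first transport the ruling: the family $\Phi$ extends to a covering family of rational curves on $\bar X$, each meeting $\partial X$, so $\bar X$ is uniruled, and pushing this family through $p^{-1}$ and then $q$ shows $\bar Y$ is uniruled; selecting the members that degenerate into $\bar\Gamma_\infty$ produces rational curves sweeping out $q(\bar\Gamma_\infty)$, hence $\overline{S_f}$.

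The main obstacle, and the real content of the theorem, is to upgrade ``uniruled'' to ``$\C$-uniruled'' for $S_f$: one must arrange that each covering curve meets $H$ in exactly one point, so that its affine trace is an $\mathbb A^1$ and therefore a genuine polynomial curve. This cannot simply be read off from the given ruling of $X$. Already for $f(x,y)=(x,xy)$ on $X=\C^2$ the set $S_f=\{0\}\times\C$ is swept not by the images of the polynomial lines of $\C^2$ but by the limits at infinity of the hyperbolas $xy=b$, which are not themselves polynomial curves. Thus the argument must produce the polynomial parametrizations of $S_f$ intrinsically, controlling how the image curves meet $H$ as the parameter varies; establishing this controlled behaviour at infinity is where the weight of the proof lies.
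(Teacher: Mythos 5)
Your compactification setup is sound --- the graph closure $\bar\Gamma\subset\bar X\times\p^m$ with projections $p,q$, and the identity $S_f=q\bigl(p^{-1}(\partial X)\bigr)\cap\C^m$, are correct and are indeed the standard geometric reformulation of Proposition \ref{topo}. But the proposal stops exactly at the two points where the theorem has content, and you say so yourself: in part (1) the dimension count gives only $\dim S_f\le\dim X-1$, and the purity statement (every component of $S_f$ has codimension exactly one in $\overline{f(X)}$) is deferred as ``the delicate technical step''; in part (2) your degeneration argument produces at best \emph{rational} curves sweeping out the closure of $S_f$, and the promotion to \emph{parametric} (polynomial) curves --- which is precisely what $\C$-uniruledness means --- is left as ``where the weight of the proof lies''. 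A plan that names both hard steps and resolves neither has a genuine gap in each half. Note also that the paper you are being compared against does not prove this statement at all: it quotes it as Theorem 4.1 of \cite{jel5}; what the paper itself proves are the quantitative refinements, Theorems \ref{cn}, \ref{cxw} and \ref{multc}.

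Those proofs show how the part-(2) obstacle you identify is actually overcome, and it is \emph{not} by controlling how compactified rational curves meet the hyperplane at infinity; one works in the space of parametrizations instead. Given a ruling $\phi:\C\times W\to X$ with $\deg_t\phi\le d_1$, compose with $f$; for $y\in S_f$ choose by Proposition \ref{topo} a sequence $(a_k,w_k)\to\infty$ with $f(\phi(a_k,w_k))\to y$, and consider the image curves $l_k(t)=f(\phi((1-t)a_k,w_k))$. Each $l_k$ is a polynomial map of degree at most $d_1d_2$, hence a point of a fixed coefficient space $\C^N$; after reparametrizing $t\mapsto\lambda_k t$ so that $\Vert l_k\Vert=1$, compactness of the unit sphere in $\C^N$ yields a subsequence converging to a limit $l$, which is \emph{automatically} a non-constant polynomial curve of degree at most $d_1d_2$ with $l(0)=y$, and it lies in $S_f$ by Proposition \ref{topo} again, since the arguments $\phi((1-\lambda_kt)a_k,w_k)$ escape to infinity. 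Taking limits of coefficient vectors is exactly the ``controlled behaviour at infinity'' you could not extract from the rational-curve picture; in your own example $f(x,y)=(x,xy)$, the images of the lines through the origin and $(1/k,kb)$ converge in this sense to the polynomial curve $t\mapsto(0,(1-t)^2b)$ inside $S_f$. Finally, be aware that this limit argument covers $S_{f\circ\phi}\supset S_f$, and identifying components of $S_f$ among those of $S_{f\circ\phi}$ (as in the proof of Theorem \ref{multc}) \emph{uses} the purity assertion of part (1); so part (1) cannot be bypassed, and your ZMT sketch, which as you note only handles the quasi-finite locus, does not supply it.
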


The first aim of this paper is to give a numerical form of Theorem \ref{hiper}. We introduce the notion of degree of uniruledness and we estimate  this degree in some cases.
In particular if $f$ is a generically finite polynomial map $f: \C^n\to \C^m$ of algebraic degree $d$, we prove that the set $S_f$ of non-properness of the map $f$ is covered by parametric curves of degree at most $d-1$. This bound is best possible.

The second aim of our paper is to  generalize Theorem \ref{hiper} to the field of real numbers (see Theorem \ref{multc1}):
\smallskip
\newline
\emph{Let $X$ be a closed algebraic set over $\R$, and let $f:X\rightarrow\R^m$ be a generically finite polynomial map. If $X$ is $\R$-uniruled,
then the set $S_f$ is also $\R$-uniruled.}
\smallskip

Our third aim  is to prove a real counterpart of  the following theorem of Bia\l ynicki-Birula.

\begin{theorem}[\cite{bial}]\label{bial}
If a connected, unipotent, algebraic group acts on an irreducible affine algebraic variety $X\subset\C^n$, then the set $Fix(G)$ of fixed points of this action has no isolated points.
\end{theorem}

The proof from \cite{bial} is cohomological, and it cannot be extended to the real case. In the last section, as an application of our methods, we modify our approach from \cite{jela} and we give a real counterpart of the result of Bia\l ynicki-Birula (Corollary \ref{glowne3}):
\smallskip
\newline
\emph{Let $G$ be a real, non-trivial, connected, unipotent group
which acts effectively and polynomially on a connected smooth closed
 algebraic variety $X\subset\R^n$. Then the set $Fix(G)$ is
$\R$-uniruled. In particular, it has no isolated points.}

\section{Preliminaries}\label{sec2}

Unless stated otherwise, $\K$ is an arbitrary algebraically closed field (the real field case is explained in Section \ref{real}).
All affine varieties are considered to be embedded in an affine space.

The study of uniruled varieties in projective geometry, that is,
varieties possessing a covering by rational curves, has a long
history. In affine geometry it is more natural to consider
polynomially parametric curves (see the definition below) than rational ones.
Therefore in \cite{jel1} (see also \cite{stas}) the first author
defined $\K$-uniruled varieties as those which are covered by
polynomially parametric curves. In \cite{jela} we refined this definition for
countable fields. In this paper we introduce and study the
corresponding quantitative parameter, the degree of
$\K$-uniruledness.

\begin{definition}
An irreducible affine curve $\Gamma\subset\K^m$ is called a \emph{polynomially parametric curve of degree at most $d$}, if there exists a non-constant polynomial map $f:\K\rightarrow\Gamma$ of degree at most $d$ (by the degree of $f=(f_1,\dots,f_m)$ we mean $\max_i \deg f_i$). A curve is \emph{polynomially parametric} if it is polynomially parametric of some degree.
\end{definition}

We have the following equivalences (see also \cite[Proposition 2.4]{jela}).

\begin{proposition}\label{k-uniruledprop}
Let $X\subset\K^m$ be an irreducible affine variety of dimension $n$, and let $d$ be a constant. The following conditions  are equivalent:
\begin{enumerate}
\item for every  $x\in X$ there exists a polynomially parametric curve $l_x\subset X$ of degree at most $d$ passing through $x$,
\item there exists an open, non-empty subset $U\subset X$ such that for every $x\in U$ there exists a polynomially parametric curve $l_x\subset X$ of degree
at most $d$ passing through $x$,
\item there exists an affine variety $W$ of dimension $\dim X-1$ and a dominant polynomial map $\phi:\K\times W\ni (t,w)\mapsto \phi(t,w)\in X$ such that
$\deg_t \phi \leq d$.
\end{enumerate}
\end{proposition}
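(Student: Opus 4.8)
The plan is to prove the cycle of implications $(1)\Rightarrow(2)\Rightarrow(3)\Rightarrow(1)$. The implication $(1)\Rightarrow(2)$ is immediate, taking $U=X$. Throughout I may assume $\dim X=n\ge 1$, since otherwise all three conditions are vacuous.

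For $(2)\Rightarrow(3)$ I would introduce the parameter space of candidate curves. Writing a polynomial map $g\colon\K\to\K^m$ of degree at most $d$ as $g(t)=\sum_{j=0}^d c_j t^j$ with $c_j\in\K^m$ identifies the space of such maps with an affine space $P\cong\K^{m(d+1)}$ via the coefficient vector $c=(c_0,\dots,c_d)$. The subset $\mathcal{M}=\{c\in P: g_c(\K)\subseteq X\}$ is Zariski closed: if $h_1,\dots,h_r$ generate the ideal of $X$, then $c\in\mathcal{M}$ if and only if each polynomial $h_i(g_c(t))\in\K[t]$ vanishes identically, i.e. if and only if all of its coefficients --- which are polynomials in $c$ --- vanish. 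The evaluation map $\Phi\colon\K\times\mathcal{M}\to X$, $(t,c)\mapsto g_c(t)$, is then a polynomial map whose image contains $U$ by hypothesis, hence is dominant. I would pick an irreducible component $\mathcal{M}_0\subseteq\mathcal{M}$ for which $\Phi\vert_{\K\times\mathcal{M}_0}$ is still dominant; then $\dim\mathcal{M}_0\ge n-1$. Finally I would cut $\mathcal{M}_0$ down to a subvariety $W$ of dimension exactly $n-1$ by intersecting with general hyperplanes of $P$. Dominance survives each cut because the generic fibre of $\Phi\vert_{\K\times\mathcal{M}_0}$ projects finitely to $\mathcal{M}_0$ (for a fixed non-constant $c$ only finitely many $t$ satisfy $g_c(t)=x$), so it has positive-dimensional image there as long as its dimension is positive, and a general hyperplane continues to meet that image. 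The restriction $\phi=\Phi\vert_{\K\times W}$ is the required dominant map, and $\deg_t\phi\le d$ since every $c\in W\subseteq P$ yields a map of degree at most $d$ in $t$.

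For $(3)\Rightarrow(1)$, the image of the dominant $\phi$ contains a dense open $U_0\subseteq X$, and for generic $w$ the curve $t\mapsto\phi(t,w)$ is non-constant of degree at most $d$; hence every point of $U_0$ already lies on such a curve. The substance is to reach an arbitrary $x_0\in X$. I would choose an irreducible curve $B\subset X$ with $x_0\in B$ and $B\cap U_0\neq\emptyset$, and over a smooth projective model $\widetilde B$ pick a branch $b\mapsto c(b)\in\mathcal{M}$ of parameters whose curves pass through $b$ for $b$ near $x_0$. Reparametrising the source affinely, which preserves $\deg_t\le d$, I may assume $g_{c(b)}(0)=b\to x_0$, so the constant term of $c(b)$ converges to $x_0$ while the higher coefficients may diverge. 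I would then rescale the source, replacing $t$ by $\varepsilon(b)\,t$ with $\varepsilon(b)\to 0$ chosen by a Newton-polygon normalisation so that the rescaled coefficients stay bounded and at least one of the coefficients of $t^1,\dots,t^d$ has a nonzero limit. Passing to the limit $b\to x_0$ gives a polynomial map $g_0\colon\K\to\K^m$ of degree at most $d$, non-constant by the normalisation, with $g_0(0)=x_0$ and $g_0(\K)\subseteq X$ by closedness of $\mathcal{M}$; its image is the sought parametric curve through $x_0$.

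The main obstacle is exactly this final limiting step. Since $\mathcal{M}$ is affine and not proper, the naive limit of the curves through $b\to x_0$ may escape to infinity in coefficient space or collapse to a constant map, and one must exclude both failure modes at once --- keeping the limit non-constant, of degree at most $d$, and with image inside the closed set $X$. The reparametrise-and-rescale normalisation described above is what achieves this; over a general field it is phrased via the valuative criterion and Puiseux expansions rather than metric limits.
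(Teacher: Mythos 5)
The weak point is your step $(2)\Rightarrow(3)$, and it comes precisely from using the naive affine coefficient space $P=\K^{m(d+1)}$ where the paper uses a weighted projectivization. Your $\mathcal{M}$ contains the locus $X_0=\{(x,0,\dots,0):x\in X\}\cong X$ of \emph{constant} maps, and the justification ``dominance survives each cut because the generic fibre of $\Phi\vert_{\K\times\mathcal{M}_0}$ projects finitely to $\mathcal{M}_0$'' is false whenever $X_0\subseteq\mathcal{M}_0$: the fibre over any $x$ then contains the vertical line $\K\times\{(x,0,\dots,0)\}$. This is not a marginal case (already for $X=\K^2$, $d=1$ one has $\mathcal{M}=P\supseteq X_0$), and in the extreme case $\mathcal{M}_0=X_0$ the conclusion itself fails, not merely its justification: $\Phi\vert_{\K\times X_0}$, being $(t,(x,0))\mapsto x$, \emph{is} dominant, yet every hyperplane cut of it destroys dominance, so ``pick any component on which $\Phi$ stays dominant'' is not good enough as stated. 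The repair needs two observations absent from your write-up: (i) under hypothesis $(2)$, $X_0$ cannot be an irreducible component of $\mathcal{M}$, since each constant map $(x,0,\dots,0)$ with $x\in U$ is the limit as $\lambda\to0$ of the points $(x,\lambda c_1,\lambda^2c_2,\dots,\lambda^dc_d)\in\mathcal{M}\setminus X_0$ obtained by rescaling a non-constant curve through $x$; hence every admissible component satisfies $\mathcal{M}_0\neq X_0$; and (ii) any fibre of $\Phi$ contains at most one vertical line (the one over $(x,0,\dots,0)$), so over a generic $x$ each component of the fibre other than that line projects to $\mathcal{M}_0$ with its full dimension $s+1-n$, while the degenerate case ``fibre $=$ that line alone'' would give $s+1-n=1$ and $(x,0,\dots,0)\in\mathcal{M}_0$ for generic $x$, forcing $\mathcal{M}_0=X_0$, excluded by (i). With (i) and (ii) (and taking, at each cut, a component of the hyperplane section that meets the image of the chosen fibre) your cutting argument does go through; this is then essentially the paper's own proof of $(2)\Rightarrow(3)$.

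As for $(3)\Rightarrow(1)$: it is only a plan, but it identifies the correct obstruction (escape to infinity in coefficient space versus collapse to a constant map) and the correct tool. Over $\C$ your rescale-and-take-limits argument is exactly the one the paper runs in its real analogue (Proposition \ref{r}) and in Theorem \ref{cn}; over a general algebraically closed field, the valuative-criterion version you allude to amounts to invoking properness of the projectivized coefficient space. That is precisely how the paper organizes the proof differently: it parametrizes curves by $(a,b)\in\K^m\times\p^M$ with weighted coefficients $b_{i,j}^jt^j$, so that the curve depends only on $(a,[b])$, constant maps are excluded outright ($b\neq0$ in $\p^M$), and completeness of $\p^M$ makes the image of the incidence variety $V$ closed; this yields $(2)\Rightarrow(1)$ in one stroke, with no limits, in any characteristic, and with no constant-curve pathology, after which $(3)$ is attached by two comparatively easy implications. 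Both of the difficulties in your proposal are artifacts of giving up that projectivization.
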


\begin{proof}
The implication $(1)\Rightarrow (2)$ is obvious. To prove $(2)\Rightarrow (1)$ suppose that $X=\{x\in\K^m:f_1(x)=0,\dots,f_r(x)=0\}$. For a point $a=(a_1,\dots,a_m)\in\K^m$ and $B=(b_{1,1}:\dots:b_{d,m})\in\p^M$, where $M=dm-1$, let
$$\varphi_{a,b}:\K\ni t\mapsto (a_1+b_{1,1}t+\dots+b_{1,d}^dt^d,\dots,a_m+b_{m,1}t+\dots+b_{m,d}^dt^d)\in\K^m$$
be a polynomially parametric curve.
Note that for every $dm$-tuple $b=(b_{1,1},\dots, b_{m,d})$ we have $\varphi_{a,\lambda b}(t)=\varphi_{a,b}(\lambda t))$ for every $\lambda\in \K^*,$ hence the image of 
$\varphi_{a,b}$  depends only on the class $[b]=B\in \p^M$ but not on $b.$ We will identify $\varphi_{a,b}$ with the curve $\varphi_{a,b}(\K).$

Consider the following  variety and  projection:
$$\K^m\times\p^M\supset V=\{(a,b)\in\K^m\times\p^M:\forall_{t,i}\;f_i(\varphi_{a,b}(t))=0\}\ni(a,b)\rightarrow a\in\K^m.$$
Note that $f_i(\varphi_{a,b}(t))=\sum_k \alpha_{i,k}(a,b)t^k,$ hence the equations $ \{ f_i(\varphi_{a,b}(t))\equiv 0\}$ split into finite number of equations $ \alpha_{i,k}(a,b)=0,$ which are homogeneous with respect to $b.$

From the definition, $(a,b)\in V$ if and only if the polynomially parametric curve $\varphi_{a,b}$ is contained in $X$. Hence the image of the projection is contained in $X$ and contains $U$,
since through every point of $U$ passes a polynomially parametric curve of degree at most $d$. But since the projective space $\p^M$ is complete and $V$ is closed, we find that the image is closed,
and hence it is the whole  $X$.

Let us prove $(2)\Rightarrow (3)$. For some affine chart
$V_j=V\cap \{b_j=1\}$ the above map is dominant. We consider the dominant map
$$\Phi:\K\times V_j\ni (t,\phi)\mapsto\phi(t)\in X.$$
After replacing $V_j$ by  some irreducible component
$Y\subset \K^m$ ($\dime(Y)=s$) the map remains dominant. On an
open subset of $X$ fibers of the map $\Phi'=\Phi\vert_{\K\times
Y}$ are of pure dimension $s+1-n$; let $x$ be one of such points.
From the construction of the set $V$ we know that the fiber
$F=\Phi'^{-1}(x)$ does not contain any line of type
$\K\times\{y\}$, so in particular the image $F'$  of  $F$
under the projection $\K\times Y\rightarrow Y$ (which is a
constructible subset of $Y$) has the same dimension. For a general
linear subspace $L\subset\K^m$ of dimension $m+n-s-1$ the set
$L\cap F'$ is $0$-dimensional (indeed, $F'$ contains an
open and dense subset of $\overline{F'}$). Let us fix such an $L$,
and let $R$ be any irreducible component of $L\cap Y$ intersecting
$F'$. Now the map $\Phi'\vert_{\K\times R}:\K\times R\rightarrow X$
satisfies the assertion, since it has one fiber of dimension $0$
(over $x$) and the dimension of $R$ is $n-1.$ Indeed, in this case we
have dim $\K\times R=\dim X$ and since the fibers of $\Phi'$ have
generically dimension $0$, the map $\Phi'$   has to be
dominant.

To prove the implication $(3)\Rightarrow (2)$ it is enough to notice that for every $w\in W$ the map
$\phi_w: \K\ni t\mapsto\phi(t,w)\in X$ is a polynomially parametric curve of degree at most $d$ or it is constant. The image of $\phi$ contains an open dense subset,
so after excluding the points with infinite preimages (a closed set of codimension at most one) we get an open set $U$ with required properties.
\end{proof}

\begin{definition}\label{k-uniruleddef}
We say that an affine variety $X$ has \emph{degree of $\K$-uniruledness at most $d$} if all its irreducible components satisfy the conditions of Proposition \ref{k-uniruledprop}. An affine variety is called
\emph{$\K$-uniruled} if it has some degree of $\K$-uniruledness.
\end{definition}

To simplify our statements we say that the empty set  has degree of $\K$-uniruledness zero, in particular it is $\K$-uniruled.

\begin{example}
{\rm Let $X\subset\K^n$ be a general hypersurface of degree $d<n.$
It is well-known (see \cite[Exercise V.4.4.3, p. 269]{kol} that
$X$ is covered by affine lines, therefore its degree of
$\K$-uniruledness is one.}
\end{example}

For uncountable (algebraically closed) fields there is also
another characterization of $\K$-uniruled varieties (see \cite[Theorem
3.1]{stas}).

\begin{proposition}\label{k-uniruleduncountable}
Let $\K$ be an uncountable algebraically closed field, and let
$X\subset \K^m$ be an affine variety. The following conditions are
equivalent:
\begin{enumerate}
\item $X$ is $\K$-uniruled,
\item for every  $x\in X$ there exists a polynomially parametric curve $l_x\subset X$ passing through $x$,
\item there exists an open, non-empty subset $U\subset X$ such that for every $x\in U$ there exists a polynomially parametric curve $l_x\subset X$ passing through $x$.
\end{enumerate}
\end{proposition}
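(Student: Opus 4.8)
The plan is to establish the cycle $(1)\Rightarrow(2)\Rightarrow(3)\Rightarrow(1)$, in which only the last implication carries content and is the sole place where the uncountability of $\K$ is used. The implication $(1)\Rightarrow(2)$ is immediate from Definition \ref{k-uniruleddef}: if $X$ is $\K$-uniruled then each of its irreducible components satisfies the conditions of Proposition \ref{k-uniruledprop} for some bound, so condition $(1)$ of that proposition already furnishes a parametric curve through every point. The implication $(2)\Rightarrow(3)$ is trivial, taking $U=X$. For $(3)\Rightarrow(1)$ I treat the essential case of $X$ irreducible; then the given non-empty open set $U$ is dense in $X$.

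For the core implication, fix a degree $d$ and recycle the incidence construction from the proof of Proposition \ref{k-uniruledprop}. Writing $X=\{f_1=\dots=f_r=0\}$, $M=dm-1$, and letting $\varphi_{a,b}$ be the degree-$d$ family used there, I set
$$V_d=\{(a,b)\in\K^m\times\p^M:\forall_{t,i}\ f_i(\varphi_{a,b}(t))=0\}$$
and let $Z_d\subset X$ be the image of $V_d$ under projection to $\K^m$. Since $\p^M$ is complete and $V_d$ is closed, each $Z_d$ is closed, and padding a parametrization with a zero top coefficient shows $Z_1\subseteq Z_2\subseteq\cdots$. I then claim that $Z_d$ is exactly the locus of points lying on a parametric curve of degree at most $d$. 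Indeed, a point $b\in\p^M$ is never zero, so some coordinate $b_{i,j}\neq 0$ and $\varphi_{a,b}$ is genuinely non-constant with $\varphi_{a,b}(0)=a$; conversely, if $x$ lies on a parametric curve $\Gamma\subset X$ of degree $\le d$, parametrized by a non-constant $g:\K\to\Gamma$, then every shift $t\mapsto g(t+s)$ is of the form $\varphi_{g(s),b}$ (to match coefficients one solves equations $b_{i,j}^{j}=c_{i,j}$, which is possible since $\K$ is algebraically closed), so $g(\K)\subseteq Z_d$, and closedness forces $\Gamma\subseteq Z_d$, whence $x\in Z_d$. Hypothesis $(3)$ therefore yields $U\subseteq\bigcup_{d\ge 1}Z_d$.

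Finally I would invoke the standard fact that an irreducible variety over an uncountable algebraically closed field is not a union of countably many proper closed subsets — the base case being that proper closed subsets of the line $\K$ are finite while $\K$ is uncountable, with an induction on dimension for the general case (this cardinality input is what underlies Theorem 3.1 of \cite{stas}). Applied to $U=\bigcup_{d}(Z_d\cap U)$, a presentation of the irreducible set $U$ as a countable increasing union of relatively closed subsets, it forces $U\subseteq Z_{d_0}$ for some $d_0$; density of $U$ and closedness of $Z_{d_0}$ then give $X=Z_{d_0}$. Thus a parametric curve of degree at most $d_0$ passes through every point of $X$, and Proposition \ref{k-uniruledprop} yields that $X$ has degree of $\K$-uniruledness at most $d_0$, i.e. $X$ is $\K$-uniruled. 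I expect the main obstacle to be exactly this cardinality lemma, together with the bookkeeping needed to make each $Z_d$ simultaneously closed and equal to the degree-$\le d$ uniruled locus; once these are in hand, the desired uniform degree bound $d_0$ emerges automatically from the impossibility of covering $U$ by countably many proper subvarieties.
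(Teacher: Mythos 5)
Your argument is correct, but there is nothing in the paper to compare it against line by line: the paper does not prove Proposition \ref{k-uniruleduncountable} at all — it simply imports it as Theorem 3.1 of \cite{stas}. So what you have written is a self-contained substitute for that citation, built out of the paper's own Proposition \ref{k-uniruledprop}, and the substance checks out. The sets $Z_d$ (images of the incidence sets $V_d\subset\K^m\times\p^M$) are closed by completeness of $\p^M$; your identification of $Z_d$ with the locus of points lying on a parametric curve of degree at most $d$ in $X$ is right, and the closedness trick — shifts give $g(\K)\subseteq Z_d$, hence $\Gamma=\overline{g(\K)}\subseteq Z_d$ — correctly handles the fact that a point of $\Gamma$ need not a priori lie in the image of the given parametrization, while the root extraction $b_{i,j}^{j}=c_{i,j}$ is exactly where the standing assumption of Section \ref{sec2} that $\K$ is algebraically closed enters (which is also why the real field requires the separate treatment of Section \ref{real}). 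The countable-union lemma you invoke (an irreducible variety over an uncountable algebraically closed field is not a countable union of proper closed subsets) is standard and your induction sketch proves it; if you prefer not to state it for the quasi-affine set $U$, apply it instead to the covering $X=(X\setminus U)\cup\bigcup_{d}Z_d$ of $X$ by closed subsets, which gives $Z_{d_0}=X$ directly. One point deserves emphasis: restricting $(3)\Rightarrow(1)$ to irreducible $X$ is not merely ``the essential case'' but is forced, since for reducible $X$ the implication is false as stated — take $X$ the disjoint union of a line $L$ and a point $p\notin L$; then $U=L$ is open and non-empty, every point of $U$ lies on the parametric curve $L\subset X$, yet the component $\{p\}$ violates Definition \ref{k-uniruleddef}. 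So the proposition, like Proposition \ref{k-uniruledprop}, must be read for irreducible $X$; with that reading your proof is complete, and it even yields slightly more than stated, namely a uniform degree bound $d_0$ valid at every point of $X$.
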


\section{The complex field case}
In the whole section we assume that the base field is $\C$. The
condition that a map is not finite at a point $y$ is equivalent to
 it being  locally non-proper in the  topological sense
(there is no neighborhood $U$ of $y$ such that
$f^{-1}(\overline{U})$ is compact). This characterization gives
the following:

\begin{proposition}[\cite{jel}]\label{topo}
Let $f:X\rightarrow Y$ be a generically finite map between affine
varieties. Then $y\in S_f$ if and only if there exists a sequence
$(x_n)$ in $X$, such that $ x_n\rightarrow\infty$ and
$f(x_n)\rightarrow y$.
\end{proposition}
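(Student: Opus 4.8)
The plan is to reduce everything to the topological description of non-properness recalled just above: $f$ fails to be finite at $y$ exactly when $f$ is not proper at $y$ in the classical topology, i.e.\ when no neighborhood $U$ of $y$ satisfies that $f^{-1}(\overline{U})$ is compact. Fixing an embedding of $X$ as a closed subset of some $\C^N$, the preimage of a closed set is closed in $\C^N$, so by the Heine--Borel theorem ``compact'' can be replaced by ``bounded'' throughout. After this translation the proposition reduces to a direct comparison between boundedness of preimages and the existence of a sequence escaping to infinity, which I would treat one implication at a time.

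For the direction starting from an escaping sequence I would argue by contraposition. Assuming $y\notin S_f$, there is a neighborhood $U$ of $y$ with $f^{-1}(\overline{U})$ bounded, say of norm at most $M$. Then any sequence $(x_n)$ with $f(x_n)\to y$ has $f(x_n)\in U\subset\overline{U}$ for large $n$, hence $x_n\in f^{-1}(\overline{U})$ and $\vert x_n\vert\le M$ eventually; such a sequence cannot tend to infinity. This shows that an escaping sequence with $f(x_n)\to y$ forces $y\in S_f$.

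For the converse I would start from $y\in S_f$, so that $f^{-1}(\overline{U})$ is non-compact for every neighborhood $U$ of $y$, and apply this to the nested balls $U_k=B(y,1/k)$. Each $f^{-1}(\overline{U_k})$ is closed in $\C^N$ but non-compact, hence unbounded, so I can pick $x_k\in f^{-1}(\overline{U_k})$ with $\vert x_k\vert\ge k$. Then $\vert x_k\vert\to\infty$, while $f(x_k)\in\overline{U_k}$ gives $\vert f(x_k)-y\vert\le 1/k$ and thus $f(x_k)\to y$, producing the required sequence.

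The only real content is the cited equivalence between algebraic finiteness of $f$ at a point and topological properness there; this is where the geometry enters, and it is the step I would regard as the main obstacle, although here it is supplied by the references. Granting it, the remainder is just the Heine--Borel dichotomy between boundedness and compactness together with the choice of a countable neighborhood basis at $y$, and I anticipate no further difficulty.
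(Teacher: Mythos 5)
Your proof is correct and takes essentially the same route as the paper, which states this proposition without proof as an immediate consequence of the topological characterization of finiteness (``$f$ is finite at $y$ iff $f^{-1}(\overline{U})$ is compact for some neighborhood $U$ of $y$''), citing \cite{jel}; your argument simply fills in the routine Heine--Borel and nested-ball details that the paper leaves implicit. As you correctly identify, the only genuinely geometric ingredient is the equivalence between algebraic finiteness at a point and local properness in the Euclidean topology, and that is precisely what the cited reference supplies.
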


In particular, for a polynomial map $f:\C^n\rightarrow\C^n$, $y\in
S_f$ if and only if either $\dim f^{-1}(y)>0$, or
$f^{-1}(y)=\{x_1,\dots,x_r\}$ is a finite set, but
$\sum_{i=1}^r\mu_{x_i} (f)<\mu(f)$, where $\mu$ denotes 
multiplicity. In other words, $f$ is not proper at $y$ if $f$ is
not  a local analytic covering over $y$.

\begin{theorem}\label{cn}
Suppose $f:\C^n\rightarrow\C^m$ is a generically finite polynomial map of degree $d$. Then the set $S_f$ is covered by parametric polynomial curves of degree at most $d-1$.
\end{theorem}

\begin{proof}
Let $y\in S_f$; by an affine transformation we can assume that
$y=O=(0,0,...,0)\in \C^m$. For the same reason we can assume that
$O\not\in f^{-1}(S_f)$. By Proposition \ref{topo} there exists
a sequence of points $x_k\to\infty$ such that $f(x_k)\rightarrow
O$. Let us consider the line $L_k(t)=tO+(1-t)x_k=(1-t)x_k, \ t\in
\C$. Set $l_k(t)=f(L_k(t))$. Of course we have  deg $l_k\leq d$ for every $k.$ Moreover, we can assume that $\deg l_k>0$,
because infinite fibers cover only a nowhere dense subset of $\C^n$.
Each curve $l_k$ is given by $m$ polynomials of one variable:
$$l_k(t)=(\sum_{i=0}^d a^1_i(k)t^i,\dots,\sum_{i=0}^d a^m_i(k) t^i).$$ Hence 
$l_k$ corresponds to the uniquely determined  point
$$(a^1_0(k),\dots,a^1_d(k);a^2_0(k),\dots,a^2_d(k);\ldots;a^m_0(k),\dots,a^m_d(k))\in \C^N.$$
Since for each $i$, $a^i_0(k)\rightarrow0$ as $k\rightarrow\infty$,
 we can change the parametrization of $l_k$
by setting $t\rightarrow \lambda_k t$ in such a way that $\Vert
l_k\Vert=1$ for $k\gg 0$ (we consider here $l_k$ as an element of
$\C^N$ with Euclidean norm). Now, since the unit sphere is
compact, it is easy to see that there exists a subsequence
$(l_{k_r})$ of $(l_k)$ which  converges to a polynomial map $l
: \C\rightarrow \C^m$ with $l (0) = O$ and  deg $l\leq d.$ Moreover, $l$ is
non-constant, because $\Vert l \Vert = 1$ and $l(0)= O.$ We can
also assume that the limit
 $\lim_{k\rightarrow \infty}\lambda_k=\lambda$ exists in the compactification of the field $\C$. We
consider two cases:
\begin{enumerate}
 \item $\lambda$ is finite: then $L_k(\lambda_kt)=(1-\lambda_kt)x_k\to\infty$ for $t\not=\lambda^{-1}.$
\item $\lambda=\infty$; then $\Vert L_k(\lambda_k t)\Vert \geq
(\vert\lambda_kt\vert-1)\Vert x_k\Vert$, and hence $\Vert
L_k(\lambda_k t)\Vert \to
 \infty$ for every $t\neq 0$.
\end{enumerate}
On the other hand, $f(L_k(\lambda_k t))=l_k(\lambda_kt)\rightarrow
l(t)$; using once more Proposition \ref{topo} this means that the
curve $l$ is contained  $S_f$, and so we see that
$S_f$ is covered by parametric polynomial curves of degree at most $d$.

Now we show that $\deg l<d.$ The idea of the proof is as follows: Note that every curve $l_k$ passes through the point $f(O)$,
but the curve $l=\lim \ l_k$ does not. The reason  is that if $l_k(t_k)=f(O)$, then $ \lim \ t_k=\infty.$
We show that if $\deg l=d$, then we can bound all $t_k$, and consequently we get a contradiction.

Assume that deg $l=d.$ Hence we can assume deg $l_k=d$ for all $k.$
Let
$l(t)=(l_1(t),\dots,l_m(t))$ and
$l_k(t)=(l_1^k(t),\dots,l_m^k(t))$. We can assume that the
component
 $l_1(t)$ has  maximal degree. Denote $f(O)=a=(a_1,\dots,a_m)$. All roots of
 the polynomial $l_1(t)-a_1$ are contained in the interior of some
 disc $D=\{ t\in \C : \vert t\vert<R\}.$ Let $\epsilon =\inf\{ \vert l_1(t)-a_1\vert: t\in
 \partial D\}$. For $k\gg 0$ we have $\vert (l_1-a_1)-(l_1^k-a_1)\vert_D<
 \epsilon.$ Consequently, by the Rouch\'{e}' Theorem these polynomials
 have the same number of  zeros (counted with multiplicities) in $D$. In particular,
 the zeros of $l_1^k-a_1$ are uniformly bounded. All curves $L_k$ pass through $O$, so all $l_k$ pass through $a=f(O)$.
 This means that there is a sequence $t_k$ such that $l_k(t_k)=a$. We have just shown that $|t_k|<R$, since $t_k$ is a root
 of the polynomial $l_1^k-a_1$. So we can assume that the sequence $t_k$ converges to some $t_0$. When we pass to the limit we get $l(t_0)=a$,
 which is a contradiction, since $a=f(O)\not\in S_f.$ Hence deg $l<d.$
\end{proof}

Now let $f:\C^n\rightarrow \C^n$ be a polynomial map with non-vanishing jacobian. The famous Jacobian Conjecture asserts that in this case $f$ is a diffeomorphism (see e.g. \cite{b-c-w,essen}). Despite many efforts the conjecture is still wide open. The main obstruction for its solution is related to the set $S_f$ of non-properness of the map $f$.
Van den Dries and McKenna proved in $1990$ that there is no counterexample to the Jacobian Conjecture for which the set $S_f$ is a union of hyperplanes
(see \cite{dreis}). This suggests that we could solve the Jacobian Conjecture if we had some information about the geometry of the set $S_f$. On the other hand, it is well-known that we can reduce the algebraic degree of the map $f$ to degree $3$ (see \cite{b-c-w,druzk}). The price  we have to pay for this reduction is that in practice we have to consider all possible dimensions, even if we try to solve the problem for a fixed dimension. Theorem \ref{cn} gives the following characterization of the set $S_f$ for generically finite cubic maps $f: \C^n \to \C^n$.

\begin{corollary}\label{glowne1}
Suppose $f:\C^n\rightarrow\C^n$ is a generically finite cubic map. Then the set $S_f$ is covered by lines and parabolas. Moreover, if $f$ is quadratic, then $S_f$ is covered only by lines.
\end{corollary}

\begin{theorem}\label{cxw}
Let  $X=\C\times W\subset \C\times \C^n$ be an affine cylinder and
let $f:\C\times W\ni (t,w)\rightarrow
(f_1(t,w),\dots,f_m(t,w))\in\C^m$ be a generically finite
polynomial map. Assume that $\deg _tf_i\leq
d$ for every $1\le i\le n$.  Then the set $S_f$ has degree of $\C$-uniruledness at most
$d$.
\end{theorem}

\begin{proof} Let $y\in S_f$; by an affine transformation we can assume that $y=O=(0,0,...,0)\in \C^m$. By Proposition
\ref{topo} there exists a sequence  $(a_k,w_k)\in\C\times W$ such that $(a_k,w_k)\to\infty$ and $f(a_k,w_k)\rightarrow y$.
Let us consider the line $L_k(t)=((1-t)a_k, w_k),
\ t\in\C$. Set $l_k(t)=f(L_k(t))$. We can assume that $\deg
l_k>0$, because infinite fibers cover only nowhere dense subset of
$X$. Each curve $l_k$ is given by $m$ polynomials of one variable:
$$l_k(t)=(\sum_{i=0}^d a^1_i(k) t^i,\dots,\sum_{i=0}^d a^m_i(k)
t^i).$$ As before,  $l_k$ corresponds to the single point
$$(a^1_0(k),\dots,a^1_d(k);a^2_0(k),\dots,a^2_d(k);\ldots;a^m_0(k),\dots,a^m_d(k))\in \C^N.$$
Since for each $i$,
$a^i_0(k)\rightarrow0$ as $k\rightarrow\infty$ we can change the parametrization of
$l_k$ by setting $t\rightarrow \lambda_k t$ in such a way that
$\Vert l_k\Vert=1$ for $k\gg 0$ (we consider here $l_k$ as an
element of  $\C^N$ with Euclidean norm). Now, since the unit sphere is
compact, there exists a subsequence $(l_{k_r})$ of $(l_k)$ which
is convergent to a polynomial map $l : \C\rightarrow \C^m$ with
$l(0) = O$. Moreover, $l$ is non-constant, because $\Vert l \Vert
= 1$ and $l(0)= O.$ We can also assume that the limit
 $\lim_{k\rightarrow \infty}\lambda_k=\lambda$ exists in the compactification of the field $\C$. We
consider two cases:
\begin{enumerate}
 \item $\lambda$ is finite; then $L_k(\lambda_kt)=((1-\lambda_kt)a_k,w_k)\to\infty$ for $t\not=\lambda^{-1}.$
\item $\lambda=\infty$; then $\Vert L_k(\lambda_k t)\Vert \geq
\max((\vert\lambda_kt\vert-1)\vert a_k\vert,\Vert w_k\Vert)$, and
$\Vert L_k(\lambda_k t)\Vert \rightarrow\infty$ for every $t\neq
0$.
\end{enumerate}
On the other hand, $f(L_k(\lambda_k t))=l_k(\lambda_kt)\rightarrow
l(t)$; using once more Proposition \ref{topo} we find that the
curve $l$ is contained in  $S_f$, and so
$S_f$ has degree of $\C$-uniruledness at most $d$.
\end{proof}

\begin{corollary}\label{wn}
Let  $f=(f_1,\ldots, f_m) :\C^n\rightarrow \C^m$ be a generically
finite map with  $d=\emph{min}_j\max_i\deg _{x_j}f_i$. Then the
set $S_f$ has degree of $\C$-uniruledness at most $d$.
\end{corollary}

\begin{proof}
Assume that $d=\max_i \deg_{x_1} f_i.$ Then $f: \C\times \C^{n-1}\to \C^m$ and we can apply Theorem \ref{cxw} for 
$W=\C^{n-1}.$
\end{proof}

Let us recall (see \cite{jel5}) that for a generically finite polynomial map $f: X\rightarrow Y$ with $X$ being
$\C$-uniruled the set $S_f$ is also $\C$-uniruled. We have the following ``quantitative''
counterpart of this result:

\begin{theorem}\label{multc}
Let  $X$ be an affine variety with degree of $\C$-uniruledness at
most $d_1$, and let $f:X\rightarrow\C^m$ be a generically finite
map of degree $d_2$. Then the set $S_f$ has degree of
$\C$-uniruledness at most $d_1d_2$.
\end{theorem}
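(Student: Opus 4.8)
The plan is to reduce to the affine cylinder case already settled in Theorem \ref{cxw}. First I may assume $X$ is irreducible: using Proposition \ref{topo} one checks that $S_f=\bigcup_i S_{f\vert_{X_i}}$ over the irreducible components $X_i$ of $X$, and every irreducible component of $S_f$ is then an irreducible component of some $S_{f\vert_{X_i}}$, so it suffices to cover each $S_{f\vert_{X_i}}$ by curves of degree at most $d_1d_2$ lying inside it. Since $X$ has degree of $\C$-uniruledness at most $d_1$, Proposition \ref{k-uniruledprop} provides an affine variety $W$ with $\dim W=\dim X-1$ and a dominant polynomial map $\phi:\C\times W\to X$ with $\deg_t\phi\leq d_1$. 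I set $g=f\circ\phi:\C\times W\to\C^m$. Then $\deg_t g_i=\deg_t(f_i\circ\phi)\leq d_1d_2$ for every $i$, and since $\phi$ is dominant between varieties of equal dimension $\dim(\C\times W)=\dim X$ it is generically finite, so $g$ is again generically finite with $\overline{g(\C\times W)}=\overline{f(X)}$. Theorem \ref{cxw} then gives that $S_g$ has degree of $\C$-uniruledness at most $d_1d_2$.

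Next I would establish the inclusion $S_f\subseteq S_g$. Let $y\in S_f$; by Proposition \ref{topo} there is a sequence $x_n\in X$ with $\vert x_n\vert\to\infty$ and $f(x_n)\to y$. The image $\phi(\C\times W)$ contains a dense open subset $U\subseteq X$, so for each $n$ I can choose $x_n'\in U$ with $\vert x_n'-x_n\vert<1/n$ and $\vert f(x_n')-f(x_n)\vert<1/n$; then $\vert x_n'\vert\to\infty$, $f(x_n')\to y$, and $x_n'=\phi(z_n)$ for some $z_n\in\C\times W$. Because $\phi$ is continuous and $\vert\phi(z_n)\vert\to\infty$, the sequence $z_n$ can have no bounded subsequence (a convergent subsequence would give a finite limit of its images), hence $\vert z_n\vert\to\infty$; as $g(z_n)=f(x_n')\to y$, Proposition \ref{topo} yields $y\in S_g$.

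Finally I would upgrade this inclusion to the conclusion. By Theorem \ref{hiper}(1), applied to $f$ and to $g$, both $S_f$ and $S_g$ are (possibly empty) hypersurfaces in $\overline{f(X)}=\overline{g(\C\times W)}$, hence pure of dimension $\dim X-1$. An irreducible component $C$ of $S_f$ is thus an irreducible subset of $S_g$ of dimension $\dim X-1$, so it lies in some irreducible component of $S_g$ and, by equality of dimensions, coincides with it. Therefore $S_f$ is a union of irreducible components of $S_g$. Since $S_g$ has degree of $\C$-uniruledness at most $d_1d_2$, each such component is covered by parametric curves of degree at most $d_1d_2$ contained in that component, hence contained in $S_f$; this gives that $S_f$ has degree of $\C$-uniruledness at most $d_1d_2$.

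The main obstacle is precisely this last transition. One cannot simply transport the covering curves of $S_g$ into $S_f$ by composing with $\phi$, because $\phi$ may contract points at infinity: a sequence escaping to infinity in $\C\times W$ need not map to a sequence escaping to infinity in $X$, so the $f$-image of such a curve need not witness non-properness of $f$ itself. This is what forces the indirect route through the inclusion $S_f\subseteq S_g$ combined with the purity furnished by Theorem \ref{hiper}, which pins down $S_f$ as a union of whole irreducible components of $S_g$ and thereby lets the covering descend.
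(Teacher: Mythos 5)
Your proposal is correct and follows essentially the same route as the paper: compose with the dominant map $\phi:\C\times W\to X$, apply Theorem \ref{cxw} to $f\circ\phi$, use the inclusion $S_f\subset S_{f\circ\phi}$, and invoke the purity of dimension from Theorem \ref{hiper} to conclude that components of $S_f$ are components of $S_{f\circ\phi}$. The only difference is that you spell out details the paper leaves implicit (the density argument proving $S_f\subseteq S_{f\circ\phi}$ and the reduction to irreducible $X$), which is a faithful elaboration rather than a different approach.
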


\begin{proof}
By Definition \ref{k-uniruleddef} there exists an affine variety
$W$ with $\dim W = \dim X-1$ and a dominant polynomial map $\phi:
\C\times W\rightarrow X$ of degree at most $d_1$ in the  first coordinate. The equality $\dim  \C\times W = \dim  X$ implies that
$\phi$ is generically finite, hence so is $f\circ\phi:\C\times
W\rightarrow \C^m$, which is of degree at
most $d_1d_2$ in the  first coordinate. By Theorem \ref{cxw}, $S_{f\circ\phi}$ has
degree of $\C$-uniruledness at most $d_1d_2$. We have the inclusion
$S_f\subset S_{f\circ\phi}$, and from Theorem \ref{hiper} we know
that if non-empty,  both sets are of pure dimension $\dim
X-1$, so each component of $S_f$ is a component of $S_{f\circ\phi}$.
This implies the assertion.
\end{proof}

\begin{example}
{\rm Let $f: \C^n\ni (x_1,\ldots, x_n)\mapsto (x_1,
x_1x_2,\ldots,x_1x_n)\in \C^n$. We have $\deg f=2$ and
$S_f=\{x\in\C^n:x_1=0\}$. The set $S_f$ has degree of
$\C$-uniruledness  $1$. This shows that in general Theorem \ref{cn}, Theorem
 \ref{cxw} and Corollary \ref{wn} cannot be improved.}
\end{example}

\begin{example}
{\rm For $n>2$ let $X=\{ x\in \C^n : x_1x_2=1\}$, and $f:X\ni
(x_1,\ldots, x_n)\mapsto (x_2,\ldots, x_n)\in\C^{n-1}$.  The
variety $X$ has degree of $\C$-uniruledness  $1$. Moreover,
$\deg f=1$ and $S_f=\{x\in\C^{n-1}:x_1=0\}$. So the set
$S_f$ has degree of $\C$-uniruledness $1$. This shows that in
general Theorems \ref{cxw} and \ref{multc} cannot be improved.}
\end{example}

\begin{remark}
{\rm By the Lefschetz Principle all the results of this section
remain true for an arbitrary algebraically closed field of
characteristic zero.}
\end{remark}

\section{The real field case}\label{real}

In the whole section we assume that the base field is $\R$. Let us recall that by a \emph{real polynomially parametric curve of degree at most $d$} in a semialgebraic set
$X\subset \R^n$ we mean the image of a non-constant real polynomial map $f:\R\rightarrow X$ of degree at most $d$. 
In general a real polynomially parametric curve need not
  be algebraic, but only  semialgebraic. The real counterpart of Proposition \ref{k-uniruledprop} is the following.

\begin{proposition}\label{r}
Let $X\subset \R^n$ be a closed semialgebraic set, and let $d$ be a constant. The following conditions are equivalent:
\begin{enumerate}
\item
for every  $x\in X$ there exists a polynomially parametric curve $l_x\subset X$ of degree at most $d$ passing through $x$,
\item
there exists a dense subset $U\subset X$ such that for every  $x\in U$ there is a polynomially parametric curve $l_x\subset X$ of degree at most $d$ passing through $x$,
\item
for every polynomial map $f:X\to\R^m$, and every sequence $x_k\in X$ such that $f(x_k)\rightarrow a\in \R^m$ there exists a semialgebraic curve $W$ and a
generically finite polynomial map $\phi:\R\times W\ni (t,w)\mapsto
\phi(t,w)\in X$ such that $\deg_t \phi \leq d$, and there exists a sequence $y_k\in \R\times W$ such that $f(\phi(y_k))\rightarrow a$. Moreover, if
$x_k \rightarrow\infty$, then also $\phi(y_k) \rightarrow\infty.$
\end{enumerate}
\end{proposition}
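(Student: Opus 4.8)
The plan is to prove the cycle $(1)\Rightarrow(2)\Rightarrow(3)\Rightarrow(1)$. The implication $(1)\Rightarrow(2)$ is immediate, taking $U=X$. Before treating the substantive implications I would set up a semialgebraic parameter space of the admissible curves. Writing a degree-$\le d$ map as $\varphi_{a,b}(t)=a+\sum_{j=1}^d b_j t^j$ with base point $a\in\R^n$ and higher coefficients $b=(b_1,\dots,b_d)$, and using the weighting under which the reparametrization $t\mapsto\lambda t$ corresponds to a scaling of $b$ (as in the proof of Proposition \ref{k-uniruledprop}), I normalize $b$ so that it ranges over a compact set. By the Tarski--Seidenberg theorem the set $V=\{(a,b):\varphi_{a,b}(\R)\subset X\}$ is closed semialgebraic, and $\varphi_v(0)$ is the base point of the curve indexed by $v\in V$.

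For $(2)\Rightarrow(3)$: given $f$ and a sequence $x_k\in X$ with $f(x_k)\to a$, density of $U$ lets me assume $x_k\in U$ while preserving $x_k\to\infty$ when it holds, so each $x_k$ is the base point of some $v_k\in V$. I then consider the semialgebraic map $\Psi:V\to\R^m\times\R$, $\Psi(v)=\bigl(f(\varphi_v(0)),(1+\Vert\varphi_v(0)\Vert)^{-1}\bigr)$, whose second coordinate records escape to infinity. Then $\Psi(v_k)\to(a,\epsilon_0)$, where $\epsilon_0=0$ exactly when $x_k\to\infty$, so $(a,\epsilon_0)\in\overline{\Psi(V)}$. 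Since $\Psi(V)$ is semialgebraic, the curve selection lemma furnishes a semialgebraic arc $\gamma:(0,\eta)\to V$ with $\Psi(\gamma(s))\to(a,\epsilon_0)$ as $s\to 0^+$. Setting $W=\overline{\gamma((0,\eta))}$, a semialgebraic curve, and $\phi:\R\times W\ni(t,w)\mapsto\varphi_w(t)\in X$, we have $\deg_t\phi\le d$; choosing $s_k\to 0$ and $y_k=(0,\gamma(s_k))$ gives $f(\phi(y_k))\to a$, and $\phi(y_k)=\varphi_{\gamma(s_k)}(0)\to\infty$ precisely when $\epsilon_0=0$, i.e.\ when $x_k\to\infty$. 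To ensure $\phi$ is generically finite I would argue exactly as in the implication $(2)\Rightarrow(3)$ of Proposition \ref{k-uniruledprop}, passing to a suitable component and restricting so that the generic fiber is finite; if $\phi$ instead collapses onto a curve, that single curve already yields the required family.

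For $(3)\Rightarrow(1)$: fix $x\in X$ and apply $(3)$ to the inclusion $f:X\hookrightarrow\R^n$ and the constant sequence $x_k=x$, obtaining a generically finite $\phi:\R\times W\to X$ with $\deg_t\phi\le d$ and points $y_k=(t_k,w_k)$ with $\phi(y_k)\to x$. Each $\phi_{w_k}:=\phi(\cdot,w_k)$ is a degree-$\le d$ curve in $X$ passing arbitrarily close to $x$. I then run the compactness argument of Theorem \ref{cn}: shift each parametrization so the marked point sits at $t=0$, normalize the higher coefficients on the unit sphere, and extract a convergent subsequence. The limit $l$ is a non-constant polynomial map of degree $\le d$ with $l(0)=x$, and since $X$ is closed and each $\phi_{w_k}(\R)\subset X$, we obtain $l(\R)\subset X$. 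Thus $l$ is a parametric curve of degree $\le d$ through $x$, giving $(1)$.

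The main obstacle is the curve-selection step in $(2)\Rightarrow(3)$: the $v_k$ are discrete, and one must manufacture from them a single semialgebraic \emph{curve} $W$ whose induced family reproduces both the limit $f(\phi(y_k))\to a$ and, when required, the escape $\phi(y_k)\to\infty$. Encoding both requirements into the single auxiliary map $\Psi$, so that one application of the curve selection lemma delivers both simultaneously, is the key device. The accompanying care lies in guaranteeing that the selected arc is genuinely one-dimensional, that $\phi$ can be arranged to be generically finite, and that the weighted normalization used in the compactness arguments preserves both membership in the closed set $X$ and non-constancy of the limiting curve.
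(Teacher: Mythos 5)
Your proof is correct in substance and runs on exactly the same machinery as the paper's — the semialgebraic parameter space $V$ of degree-$\le d$ curves with coefficients normalized to a sphere (closed, semialgebraic by Tarski--Seidenberg), the curve selection lemma to manufacture $W$, and compactness of the normalized family to produce a limit curve through a prescribed point — but you traverse the equivalence in the opposite cyclic order. The paper proves $(2)\Rightarrow(1)$ (compactness in $V$), then $(1)\Rightarrow(3)$ (curve selection applied to $g=f\circ\Phi$ on $\R\times V$, where $(1)$ makes $\Phi:\R\times V\to X$ surjective so the given $x_k$ lift directly), and gets $(3)\Rightarrow(2)$ for free by taking $f=\mathrm{id}$; you take $(1)\Rightarrow(2)$ for free, run curve selection for $(2)\Rightarrow(3)$, and place the compactness argument in $(3)\Rightarrow(1)$. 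The trade-offs: starting curve selection from $(2)$ rather than $(1)$ forces your perturbation step, and there you must preserve $f(x_k)\to a$, not only $x_k\to\infty$; since $f$ is not uniformly continuous on unbounded sets, ``pick $x_k'\in U$ within $1/k$ of $x_k$'' is not enough, and you should instead choose $x_k'\in U$ inside the ball where continuity of $f$ \emph{at the fixed point} $x_k$ guarantees $\vert f(x_k')-f(x_k)\vert<1/k$ (a one-line fix you elide, and which the paper avoids entirely by working from $(1)$). On the other hand, your auxiliary map $\Psi$ with second coordinate $(1+\Vert\varphi_v(0)\Vert)^{-1}$, encoding escape to infinity so that one application of curve selection yields both the limit $f(\phi(y_k))\to a$ and the ``moreover'' clause, is a genuine improvement in care: the paper's proof of $(1)\Rightarrow(3)$ never addresses that clause at all. (Minor point: $\epsilon_0$ exists only after passing to a subsequence of $(x_k)$, which is harmless.)

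Two delicate points are left at the level of a remark in both your write-up and the paper, so they should not be held against you, but be aware of them. First, the selected arc may be constant, so that $W$ degenerates to a point; note this can only happen when $x_k\not\to\infty$, since $(a,0)\notin\Psi(V)$, and then one replaces $W$ by a curve in $V$ through that point (the paper does the same). Second, generic finiteness of $\phi=\Phi\vert_{\R\times W}$ is not automatic: if every $w\in W$ parametrizes the same image curve, then $\phi$ has one-dimensional image and is \emph{never} generically finite — so your fallback ``if $\phi$ collapses onto a curve, that single curve already yields the required family'' does not parse as stated. The repair is to enlarge $W$ to $W\cup W'$ where $W'\subset V$ is a curve whose associated family sweeps out a two-dimensional set (possible when $\dim X\ge 2$; for one-dimensional $X$ condition $(3)$ is problematic as literally stated, which is a defect of the proposition itself, present equally in the paper's proof, whose ``take as $W$ any semialgebraic curve in $V$'' has the identical lacuna).
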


\begin{proof}
First we prove the implication $(2)\Rightarrow (1)$. Suppose that
$X=\{x\in\R^n:f_1(x)=0,\dots,f_r(x)=0\, g_1(x)\geq 0,...,
g_s(x)\geq 0\}$. For $a=(a_1,\dots,a_n)\in\R^n$ and
$b=(b_{1,1},\dots,b_{d,n})\in\R^M$, where $M=dn$, let
$$\varphi_{a,b}(t)=(a_1+b_{1,1}t+\dots+b_{1,d}^dt^d,\dots,a_n+b_{n,1}t+\dots+b_{n,d}^dt^d)$$
be a polynomially parametric curve. If there exists a polynomially parametric curve of
degree at most $d$ passing through $a$, then after
reparametrization we can assume that it is $\varphi_{a,b}$ for
$\sum_{i,j} b_{i,j}^2=1$. This means that $b\in S_M(0,1)$, where
$S_M$ denotes the unit sphere in $\R^M$. Consider the semialgebraic
set
$$V=\{(a,b)\in\R^n\times S_M(0,1):\forall_{t,i}\;f_i(\varphi_{a,b}(t))=0, \forall_{t,j}\;g_j(\varphi_{a,b}(t))\geq 0\}.$$
The definition of the set $V$ says that for $(a,b)\in V$ the polynomially parametric curve $\varphi_{a,b}(t)$ is contained in $X$.
It is easy to see that $V$ is closed. For any $a\in X$, by the assumption there is a sequence  $a_k\rightarrow a$ such that for every $k$ there is
 a polynomially parametric curve $\varphi_{a_k,b_k}\in V$. We can assume that $\Vert a_k\Vert<\Vert a\Vert+1$ for all $k$. Since
  $V$ is closed and the  sequence $((a_k,b_k))\subset V$
 is bounded,  there is a subsequence $(a_{k_r},b_{k_r})$ which converges to $(a,b)\in V$. Now the polynomially parametric curve $\varphi_{a,b}\subset X$ of degree at most $d$
 passes through $a$.

We prove $(1)\Rightarrow (3)$. Consider the semialgebraic set $V$ as above. We have the surjective map
$$\Phi: \R\times V\ni (t,\varphi_{a,b})\rightarrow\varphi_{a,b}(t)\in X.$$
 Let $f:X\to\R^m$ be a polynomial map,
and suppose $f(x_k)\rightarrow a\in \R^m$ for a  sequence $x_k\in X$. Set
$g=f\circ\Phi$. Hence there exists a sequence $z_k\in \R\times V$
such that $g(z_k)\rightarrow a$. By the curve selection lemma there is a
semialgebraic curve $W_1\subset \R\times V $ such that $a\in
\overline{g(W_1)}$. Set $W_2=p_2(W_1)$, where $p_2: \R\times V
\rightarrow V$ is the projection. If $W_2$ is a curve then let $W:=W_2$, if
it is a point we take as $W$ any semialgebraic curve in $V$ which
contains the point $\pi(W_1)$. Now $W$ and $\Phi\vert_{\R\times W})$ are as required.

Finally, to prove $(3)\Rightarrow (2)$ it is enough to take as $f$ the identity in the third condition.
\end{proof}

\begin{definition}\label{R-uniruleddef}
We say that a closed semialgebraic set $X$ has \emph{degree of $\R$-uniruledness at
most $d$} if it satisfies the conditions of Proposition \ref{r}. A closed semialgebraic set is called \emph{$\R$-uniruled} if it has some degree of
 $\R$-uniruledness.
\end{definition}

\begin{example}
{\rm Let $X=\{ (x,y)\in\R^2 : x\geq 0, y\ge 0\}$. It is easy to
check that the degree of $\R$-uniruledness of $X$ is $2$. It has a
ruling $\{(a, t^2) : a\ge 0\}$.}
\end{example}

Let $X\subset\R^n$ be a closed semialgebraic set, and let $f: X\to\R^m$ be a polynomial map. As in the complex case, we say that it is not proper at a point
$y\in\R^m$ if there is no neighborhood $U$ of $y$ such that $f^{-1}(\overline{U})$ is compact. As before, we denote  by $S_f$
the set of all points $y\in\overline{f(X)}$ at which 
$f$ is not proper. This set is also closed and semialgebraic \cite{jel21}. We have:

\begin{theorem}\label{cn1}
Let $f:\R^n\rightarrow\R^m$ be a generically finite polynomial map of degree $d$. Then the set $S_f$ has degree of $\R$-uniruledness at most $d-1$.
\end{theorem}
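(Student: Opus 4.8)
The plan is to mimic the proof of the complex case Theorem~\ref{cn} as closely as possible, replacing the complex-analytic ingredients with their real semialgebraic counterparts established in Proposition~\ref{r}. First I would reduce to the case $y=O$ by an affine change of coordinates, and likewise arrange that $O\notin f^{-1}(S_f)$. Since $O\in S_f$, the real analogue of Proposition~\ref{topo} (non-properness detected by sequences escaping to infinity, valid in the semialgebraic setting by \cite{jel21}) gives a sequence $x_k\to\infty$ with $f(x_k)\to O$. I would then form the lines $L_k(t)=(1-t)x_k$ through $O$ and $x_k$, set $l_k(t)=f(L_k(t))$, and record each $l_k$ as a point of $\R^N$ via its coefficient vector, exactly as in Theorem~\ref{cn}. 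Because the constant terms tend to $O$, a reparametrization $t\mapsto\lambda_k t$ normalizes $\Vert l_k\Vert=1$; compactness of the unit sphere in $\R^N$ then yields a convergent subsequence $l_{k_r}\to l$ with $l$ a non-constant polynomial map of degree at most $d$ satisfying $l(0)=O$.

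Next I would run the two-case analysis on $\lambda=\lim\lambda_k$ in the compactification of $\R$: if $\lambda$ is finite then $L_k(\lambda_k t)\to\infty$ for $t\neq\lambda^{-1}$, and if $\lambda=\infty$ then $\Vert L_k(\lambda_k t)\Vert\to\infty$ for $t\neq 0$. In either case $L_k(\lambda_k t)\to\infty$ for all but finitely many $t$, while $f(L_k(\lambda_k t))=l_k(\lambda_k t)\to l(t)$. Invoking the sequential characterization of non-properness once more shows that the image of $l$ lies in $S_f$, so through $O$ passes a real parametric curve of degree at most $d$. By part~$(1)\Leftrightarrow(2)$ of Proposition~\ref{r} this already gives degree of $\R$-uniruledness at most $d$.

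The main obstacle, exactly as over $\C$, is sharpening the bound from $d$ to $d-1$, and here the complex proof relied on Rouch\'e's Theorem, which has no direct real analogue. The purpose of that step was to show $\deg l<\deg l_k$ for some $k$: one argues that if the degrees all coincided, then the roots of the top-degree component of $l_k-f(O)$ would stay bounded, a convergent subsequence of such roots $t_k\to t_0$ would force $l(t_0)=f(O)$, and since $f(O)\notin S_f$ this contradicts $O\notin f^{-1}(S_f)$. To carry this over to $\R$ I would replace the root-counting by a continuity/properness argument on coefficient vectors: the condition $\deg l=\deg l_k$ means the leading coefficient survives in the limit, so the polynomial equation $l_1^k(t)=a_1$ has a solution $t_k$ (coming from the fact that each $l_k$ passes through $a=f(O)$) whose modulus is controlled by the ratio of the constant-type coefficients to the (non-vanishing) leading coefficient. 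Uniform control of these coefficients under convergence $l_k\to l$ bounds the $t_k$, so a subsequence converges to some $t_0$ with $l(t_0)=a$, contradicting $a\notin S_f$ together with $O\notin f^{-1}(S_f)$. I expect this coefficient-boundedness estimate, replacing Rouch\'e, to be the delicate point; everything else transfers verbatim from Theorem~\ref{cn}.
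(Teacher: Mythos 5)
Your proof is correct, and it is essentially the paper's own route: the paper's entire ``proof'' of Theorem \ref{cn1} is the remark that the proofs are exactly the same as in the complex case, i.e.\ the argument of Theorem \ref{cn} is to be run verbatim over $\R$, precisely as you do (reduction to $y=O$ with $O\notin f^{-1}(S_f)$, the lines $L_k$, normalization on the sphere in $\R^N$, the two-case analysis on $\lambda$, and the sequential characterization of non-properness from \cite{jel21}). The one place you deviate is the degree-drop step, and there your premise is slightly off: Rouch\'e's theorem does not need a ``real analogue'' here. The polynomial $l_1^k-a_1$ has real coefficients, but nothing prevents you from regarding it as a polynomial on $\C$; Rouch\'e then bounds \emph{all} of its complex roots for $k\gg 0$, and the real parameters $t_k$ with $l_k(t_k)=a$ are among those roots, so they stay bounded and the complex argument transfers word for word. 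That said, your substitute is valid and even more elementary: under the contradiction hypothesis $\deg l=\deg l_k$, the leading coefficients of $l_1^k$ converge to the nonzero leading coefficient of $l_1$, hence are bounded away from zero, while all other coefficients are bounded; the Cauchy bound $|t|\le 1+\max_{i<D}|c_i(k)|/|c_D(k)|$ (which involves \emph{all} non-leading coefficients, not only the ``constant-type'' ones as you phrase it) then bounds the $t_k$ uniformly, a subsequence converges to some $t_0$, and $l(t_0)=a\in S_f$ (using that $l(\R)\subset S_f$, which is closed by \cite{jel21}) contradicts $f(O)\notin S_f$. So the step you flag as the delicate point is in fact routine either way; both versions yield $\deg l\le d-1$, yours with the small bonus of avoiding complex analysis altogether.
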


\begin{theorem}\label{cxw1}
Let $X=\R\times W\subset \R\times \R^n$ be a closed semialgebraic cylinder and let $f:R\times W\ni (t,w)\mapsto (f_1(t,w),\dots,f_m(t,w))\in \R^m$ be a
generically finite polynomial map. Assume that $\deg _tf_i\leq d$ for every $i$. Then the set $S_f$ has degree of $\R$-uniruledness at most $d$.
\end{theorem}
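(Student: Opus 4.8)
The plan is to transcribe, almost line for line, the argument proving the complex Theorem~\ref{cxw}, replacing the classical complex topology by the Euclidean topology on $\R^n$ and checking, at the end, condition~$(1)$ of Proposition~\ref{r} for the set $S_f$. The product structure $X=\R\times W$ is exactly what makes the scheme legal: because the first factor is \emph{all} of $\R$, the auxiliary lines built below stay inside $X$ for every value of the parameter. I first record the real analogue of Proposition~\ref{topo}, which is the same elementary unwinding of the definition: since $S_f$ is defined by the same non-properness condition and a closed unbounded semialgebraic set is non-compact, a point $y$ lies in $S_f$ if and only if there is a sequence $x_k\in X$ with $\Vert x_k\Vert\to\infty$ and $f(x_k)\to y$.

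Fix $y\in S_f$ and, after an affine change of coordinates, assume $y=O$. Choose $(a_k,w_k)\in\R\times W$ with $\Vert(a_k,w_k)\Vert\to\infty$ and $f(a_k,w_k)\to O$, and set $L_k(t)=((1-t)a_k,w_k)$, a curve lying entirely in $X$. Then $l_k:=f\circ L_k$ is given by $m$ polynomials in $t$ of degree at most $d$, and as in the complex case we may assume $\deg l_k>0$. Identifying $l_k$ with its coefficient vector in some $\R^N$, the constant term is $l_k(0)=f(a_k,w_k)\to O$; reparametrizing $t\mapsto\lambda_k t$ to normalize $\Vert l_k\Vert=1$ and using compactness of the unit sphere in $\R^N$, I pass to a subsequence along which $l_k(\lambda_k t)$ converges coefficientwise to a polynomial map $l:\R\to\R^m$ of degree at most $d$ with $l(0)=O$ and $\Vert l\Vert=1$; the last two facts force $l$ to be non-constant.

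The heart of the matter, and the step I expect to need the most care, is showing $l(\R)\subset S_f$. Letting $\lambda=\lim\lambda_k$ in the compactification of $\R$, I split into the cases $\lambda$ finite and $\lambda=\infty$ exactly as for Theorem~\ref{cxw}. Using the bound $\Vert L_k(\lambda_k t)\Vert\ge\max\bigl(|1-\lambda_k t|\,|a_k|,\,\Vert w_k\Vert\bigr)$ together with $\max(|a_k|,\Vert w_k\Vert)\to\infty$, one checks that $L_k(\lambda_k t)\to\infty$ for every $t$ outside at most one exceptional value (namely $t\neq\lambda^{-1}$, respectively $t\neq 0$); since simultaneously $f(L_k(\lambda_k t))=l_k(\lambda_k t)\to l(t)$, the real sequence criterion recorded above places $l(t)$ in $S_f$ for all such $t$. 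Because $S_f$ is closed and $l$ is continuous, passing to limits gives $l(\R)\subset S_f$, so through the arbitrary point $y\in S_f$ runs a non-constant real polynomial curve of degree at most $d$ contained in $S_f$. This is precisely condition~$(1)$ of Proposition~\ref{r} applied to $S_f$, whence by Definition~\ref{R-uniruleddef} the set $S_f$ has degree of $\R$-uniruledness at most $d$. The only genuinely real ingredients are the escape-to-infinity estimate (purely metric, hence transferring verbatim) and the final passage from generic $t$ to all of $\R$ via closedness of $S_f$; I do not anticipate any obstruction beyond what the complex proof already overcomes.
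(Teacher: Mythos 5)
Your proposal is correct and follows exactly the route the paper intends: for Theorem \ref{cxw1} the paper simply declares that the proof is the same as in the complex case, and your transcription of the proof of Theorem \ref{cxw} --- the real sequence criterion for $S_f$, the lines $L_k(t)=((1-t)a_k,w_k)$, normalization and compactness of the unit sphere in $\R^N$, and the two cases for $\lambda$ --- is precisely that argument. Your extra care in handling the single exceptional parameter value via closedness of $S_f$ and in verifying condition $(1)$ of Proposition \ref{r} merely fills in details the paper leaves implicit.
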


\begin{corollary}\label{cxw2}
Let $L=\phi(\R)$ be a polynomially parametric curve of degree $D.$ Let $X=L\times W\subset \R\times \R^n$ be a closed semialgebraic cylinder and let $f:L\times W\ni (x,w)\mapsto (f_1(x,w),\dots,f_m(x,w))\in \R^m$ be a
generically finite polynomial map. Assume that $\deg _tf_i\leq d$ for every $i$. Then the set $S_f$ has degree of $\R$-uniruledness at most $dD$.
\end{corollary}

\begin{proof}
For the proof it is enough to note that the mapping $\R\times W\ni (t,w)\mapsto (\phi(t),w)\in L\times W$ is proper nad generically-finite.
\end{proof}

\begin{corollary}\label{wn1}
Let  $f=(f_1,\ldots, f_m) :\R^n\rightarrow \R^m$ be a generically finite polynomial map with $d=\emph{min}_j\max_i\deg _{x_j}f_i$.
Then the set $S_f$ has degree of $\R$-uniruledness at most $d$.
\end{corollary}

The proofs of these facts are exactly the same as in the complex case.
To prove a real analog of Theorem \ref{multc} we need some ideas
from \cite{jela}. Let $X$ be a smooth complex projective surface,
and let $D=\sum_{i=1}^n D_i$ be a simple normal crossing 
divisor on $X$ (we consider only reduced divisors). Let
$\graph(D)$ be the graph of $D$, with vertices $D_i$, and one edge
between $D_i$ and $D_j$ for each point of intersection of $D_i$
and $D_j$.

\begin{definition}
We say that $D$ a simple normal crossing divisor on a smooth surface $X$ is \emph{a tree} if $\graph(D)$ is a tree (it is connected and acyclic).
\end{definition}

The following fact is obvious from graph theory.

\begin{proposition}\label{acykl}
Let $X$ be a smooth projective surface and $D\subset X$ be a
divisor which is a tree. If $D', D''\subset D$ are connected
divisors without common components, then $D'$ and $D''$ have at
most one  point in common.
\end{proposition}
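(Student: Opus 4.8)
The plan is to reduce the statement entirely to the combinatorics of $\graph(D)$, using in an essential way that $D$ is simple normal crossing. First I would set up the dictionary between the geometry and the graph. Write $V'$ and $V''$ for the sets of vertices of $\graph(D)$ corresponding to the components of $D'$ and of $D''$ respectively; since $D'$ and $D''$ have no common component, $V'\cap V''=\emptyset$. Because $D$ is s.n.c., through any point of $X$ pass at most two components of $D$, so any point lying on both $\mathrm{supp}(D')$ and $\mathrm{supp}(D'')$ lies on exactly one component of $D'$ and exactly one of $D''$. Hence a common point of $D'$ and $D''$ is precisely an intersection point of some vertex of $V'$ with some vertex of $V''$, that is, an edge of $\graph(D)$ joining $V'$ to $V''$; and by the very definition of $\graph(D)$ (one edge per intersection point) distinct common points give distinct such edges.

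Next I would argue by contradiction. Suppose $D'$ and $D''$ have two distinct common points, corresponding to two distinct edges $e_1,e_2$ of $\graph(D)$, where $e_i$ joins $u_i\in V'$ to $w_i\in V''$. Since $D'$ is connected, the subgraph induced on $V'$ is connected, so there is a simple path $P'$ from $u_1$ to $u_2$ lying entirely in $V'$; likewise connectivity of $D''$ yields a simple path $P''$ from $w_1$ to $w_2$ lying in $V''$. Concatenating, I obtain the closed walk
\[
u_1\xrightarrow{e_1}w_1\xrightarrow{P''}w_2\xrightarrow{e_2}u_2\xrightarrow{P'}u_1 .
\]
Because $V'\cap V''=\emptyset$, the vertices occurring along $P'$ are disjoint from those occurring along $P''$, and the only edges crossing between the two sides are $e_1$ and $e_2$. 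Consequently this walk repeats no vertex other than its endpoint and is therefore a genuine cycle, contradicting the assumption that $\graph(D)$ is a tree. Thus $D'$ and $D''$ have at most one common point.

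The only point requiring a little care is the handling of degenerate configurations, which I would dispatch in the same breath: if $u_1=u_2$ and $w_1=w_2$, then $e_1,e_2$ are two parallel edges already forming a cycle of length two; if exactly one equality holds, say $u_1=u_2$, the walk above collapses to $u_1\xrightarrow{e_1}w_1\xrightarrow{P''}w_2\xrightarrow{e_2}u_1$, which is again a genuine cycle by the disjointness of $V'$ and $V''$. Beyond this bookkeeping I expect no real obstacle — the whole argument is the elementary fact that two vertex‑disjoint connected subtrees of a tree cannot be joined by two independent edges — so this completes the proof.
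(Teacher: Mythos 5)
Your proof is correct, and it supplies exactly the argument the paper has in mind: the paper states this proposition without proof, declaring it ``obvious from graph theory,'' and your reduction of common points to edges between the disjoint vertex sets $V'$, $V''$ followed by the cycle construction (including the parallel-edge and collapsed-path degenerate cases) is precisely the elementary fact being invoked. No gaps; if anything, your write-up is more careful than the paper, which records no argument at all.
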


\begin{definition}
Let $X\subset \R^n$ ($X\subset \Bbb P^n$)be an algebraic variety. Hence we have a natural embedding $X\subset \C^n$ ($X\subset \Bbb P^n(C)$). By \emph{the complexification} $X^c$ of the variety $X$ we mean the Zariski closure of $X$ in $\C^n$ ($\Bbb P^n(\C)$).
\end{definition}

Now we are ready to prove a real counterpart of Theorem
\ref{multc}. In particular we show that for a generically finite
map $f: X\rightarrow Y$ of real algebraic sets, the set $S_f$
is also $\R$-uniruled, provided  $X$ is.

\begin{theorem}\label{multc1}
Let  $X\subset \R^n$ be a closed algebraic set with degree of
$\R$-uniruledness at most $d_1$, and let $f:X\rightarrow\R^m$ be a
generically finite polynomial map of degree $d_2$. Then the set
$S_f$ is also $\R$-uniruled. Moreover, its degree of
$\R$-uniruledness is at most $2d_1d_2$.
\end{theorem}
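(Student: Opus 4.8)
The plan is to reduce the statement to the two-dimensional cylindrical situation already handled by Theorem \ref{cxw1}, and then to transport the resulting rulings from the composite map back to $f$ itself. First I would fix a point $y\in S_f$; by the real analogue of Proposition \ref{topo} there is a sequence $x_k\in X$ with $x_k\to\infty$ and $f(x_k)\to y$. Applying condition $(3)$ of Proposition \ref{r} to the map $f$ and this sequence, I obtain a semialgebraic curve $W$ and a generically finite polynomial map $\phi:\R\times W\to X$ with $\deg_t\phi\le d_1$, together with a sequence $y_k\in\R\times W$ such that $f(\phi(y_k))\to y$ and, because $x_k\to\infty$, also $\phi(y_k)\to\infty$ (hence $y_k\to\infty$). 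Setting $g=f\circ\phi:\R\times W\to\R^m$ we have $\deg_t g\le d_1d_2$, and $g$ is generically finite as well, the image $\phi(\R\times W)$ not being collapsed by $f$. Thus Theorem \ref{cxw1} applies to $g$ and shows that $S_g$ has degree of $\R$-uniruledness at most $d_1d_2$; moreover $y\in S_g$, since $y_k\to\infty$ and $g(y_k)\to y$.

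The heart of the matter is to pass from a ruling of $S_g$ through $y$ to a ruling of $S_f$ through $y$. Here I would track, alongside the curve $l\subset S_g$ produced by Theorem \ref{cxw1} as a rescaled limit of $g(L_k(\lambda_k t))$ with $L_k(t)=((1-t)a_k,w_k)$, the companion curves $\Lambda_k(t)=\phi(L_k(\lambda_k t))$ in $X$, which satisfy $\deg_t\Lambda_k\le d_1$ and $f\circ\Lambda_k\to l$. For $t$ outside a finite set one has $L_k(\lambda_k t)\to\infty$ in $\R\times W$, so after passing to a subsequence there are two possibilities: either $\Lambda_k(t)\to\infty$ in $X$, in which case $l(t)\in S_f$ by the sequential criterion for non-properness; or $\Lambda_k(t)$ stays bounded and converges to a point of the non-properness set $S_\phi$ of $\phi$, in which case $l(t)\in f(S_\phi)$. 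Since $S_\phi$ is at most one-dimensional, $f(S_\phi)$ is semialgebraic of dimension at most one, so either $l$ meets $f(S_\phi)$ in finitely many points --- whence $l(t)\in S_f$ for a dense set of $t$ and, $S_f$ being closed, $l\subset S_f$ with degree at most $d_1d_2$ --- or the whole curve $l$ lies in $f(S_\phi)$.

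This last alternative, $l\subset f(S_\phi)$, is the main obstacle, and it is exactly the phenomenon that cannot arise over $\C$, where part $(1)$ of Theorem \ref{hiper} forces $S_f$ and $S_{f\circ\phi}$ to share components. To handle it I would complexify the surface $\R\times W$, take its projective closure and resolve it to a smooth projective surface $\tilde Z$ with simple normal crossing boundary divisor $D=\tilde Z\setminus(\R\times W)_{\C}$, arranging after further blow-ups that $D$ is a tree so that Proposition \ref{acykl} is available; the maps extend to morphisms $\tilde\phi:\tilde Z\to\overline{X}$ and $\tilde g=\tilde f\circ\tilde\phi:\tilde Z\to\p^m$ once indeterminacies are resolved. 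The point $y$ comes from a real point $q\in D$ with $\tilde g(q)=y$ and $\tilde\phi(q)\in\overline{X}\setminus X$, the latter encoding $\phi(y_k)\to\infty$. The tree structure of $D$, via Proposition \ref{acykl}, should let me select a component $D_i\ni q$ whose image $\tilde\phi(D_i)$ runs to infinity in $\overline{X}$, so that $\tilde g(D_i)$ is a curve through $y$ lying in $S_f$ rather than merely in $f(S_\phi)$, with degree controlled by $\deg_t g\le d_1d_2$.

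It remains to produce from $\tilde g(D_i)$ an honest real parametric curve inside $S_f$ through $y$, and this is where the extra factor of $2$ enters. The component $D_i$ is rational and defined over $\R$ with the real point $q$ on it, but the point of $D_i$ lying over the hyperplane at infinity of $\p^m$ need not be real; when it is not, a straightforward projection fails to yield a polynomial parametrization over $\R$, and one must instead use a degree-two substitution --- as in the ruling $\{(a,t^2)\}$ of the quadrant --- to parametrize a real semialgebraic arc of $\tilde g(D_i)$ through $y$ by a real polynomial map of degree at most $2d_1d_2$. Performing this at every $y\in S_f$ and invoking condition $(1)$ of Proposition \ref{r} shows that $S_f$ is $\R$-uniruled of degree at most $2d_1d_2$. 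I expect the genuine difficulty to be concentrated in the third step: guaranteeing, through the tree structure, a boundary component through $q$ that $\tilde\phi$ sends to infinity while keeping its $\tilde g$-degree bounded by $d_1d_2$.
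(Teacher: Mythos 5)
Your overall skeleton matches the paper: the reduction through Proposition \ref{r}(3) and Theorem \ref{cxw1}, the diagnosis that the real obstruction is a ruling of $S_{f\circ\phi}$ falling into $f(S_\phi)$ rather than $S_f$, and the choice of machinery (complexification, compactification, resolution, tree boundary divisor, Proposition \ref{acykl}) are all the paper's. But your third step has a genuine gap, which you yourself flag as ``the genuine difficulty'' and never close: the assertion that $\tilde g(D_i)$ has ``degree controlled by $\deg_t g\le d_1d_2$'' is unjustified, and it is not a technicality. The components of $\tilde\phi^{-1}(\overline{X^c}\setminus X^c)$ are largely exceptional curves created by the blow-ups; since only the $t$-degree of $g=f\circ\phi$ is bounded (its degree in the $W$-variable is not), the multiplicities of the blown-up base points, and hence the degrees of the resolved map $F$ restricted to exceptional components, admit no bound in terms of $d_1d_2$. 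The paper never attempts such a bound. Its route is different at exactly this point: (i) it passes to $Q=\overline{Z}^c_m\cap\phi'^{-1}(X^c)$, on which the extension $\phi'$ is \emph{proper}, so the non-properness set of $f'=f\circ\phi'$ restricted to the real part $Q^r=\overline{\R\times W}$ is contained in $S_f$ automatically --- properness, not a ``selection'' of a component running to infinity, is what defeats the $f(S_\phi)$ problem (indeed every component of $\phi'^{-1}(\overline{X^c}\setminus X^c)$ maps entirely into infinity, so there is nothing to select); (ii) the tree property via Proposition \ref{acykl} shows each component of this set is a parametric curve $F(l^r\setminus L)$, of \emph{a priori unknown} degree, which is simultaneously a component of $S_{f\circ\phi}$; (iii) Theorem \ref{cxw1} supplies parametric curves of degree at most $d_1d_2$ inside $S_{f\circ\phi}$, hence inside the Zariski closure of that component; and (iv) Lemma \ref{xx} (L\"uroth plus a quadratic substitution) converts this into a polynomial parametrization of the component itself of degree at most $2d_1d_2$. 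Without an analogue of (iii)--(iv), your argument produces a curve in $S_f$ through $y$ but no bound on its degree, so the quantitative statement is not proved.

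Your explanation of the factor $2$ is also misattributed. The unique point of $l\cap L$ is automatically real: both $l$ and $L$ are invariant under complex conjugation, and by the tree property they meet in exactly one point, which is therefore fixed by conjugation; so the ``non-real point at infinity'' you worry about never occurs. The true source of the factor $2$ is the half-line phenomenon handled by Lemma \ref{xx}: the component of $S_f$ through $a$ may be only a semialgebraic sub-arc of the image of the degree-$(d_1d_2)$ curve coming from Theorem \ref{cxw1} (a half-line inside a line), and parametrizing such an arc polynomially forces composition with a degree-two polynomial --- your quadrant example is the right picture, but invoking it presupposes a degree-$(d_1d_2)$ parametrization of the ambient curve, which is precisely what your step three fails to supply.
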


\begin{proof}
Let $a\in S_f$ and let $x_k\in X$ be a sequence of points such
that $f(x_k)\rightarrow a$ and $x_k\to\infty.$ By Proposition \ref{r}
there exists a semialgebraic curve $W\subset \R^Q$  and a generically finite
polynomial map $\phi:  \R\times W \ni (t,w)\rightarrow\phi(t,w)\in X$
such that $\deg_t\phi \leq d_1$, and there exists a sequence
$(y_k)\subset\R\times W$ such that $f(\phi(y_k))\rightarrow a$ and
$y_k\to\infty.$  In particular $a\in S_{f\circ\phi}.$ 
If we  knew that the mapping $\phi$ is proper, then $S_{f\circ\phi}\subset S_f$ and we are done by Theorem \ref{cxw1}.
However, in general it is not true. Our idea is to obtain a suitable compactification $\phi'$ of the map $\phi$, and then to derive all information from the fact that 
$S_{f\circ\phi'}\subset S_{f\circ\phi}$ and  $S_{f\circ \phi'}\subset S_f.$

Let $\Gamma\subset \R^Q$
be the Zariski closure of $W$. We can assume that $\Gamma$ is smooth and irreducible. Denote
$Z:=\R\times\Gamma$. We have the induced map $\phi: Z \rightarrow X$.
Hence we also have  the induced complex map $\phi^c:
Z^c:=\C\times\Gamma^c\rightarrow X^c$, where $Z^c,X^c$ denote the
complexification of $Z$ and $X$ respectively. Note that we can resolve the complex singularities of $\Gamma^c$ and this process
does not affect the real structure of the curve $\Gamma.$ Hence we can assume that $\Gamma^c$ is smooth. 

Let $\overline{\Gamma^c}$ be a smooth completion of $\Gamma^c$ and
let us write $\overline{\Gamma^c}\setminus\Gamma=\{ a_1,...,
a_l\}.$ Let $\p^1\times\overline{\Gamma^c}$ be a projective
completion of $Z^c$. The divisor $D=\overline{Z^c}\setminus
Z^c=\infty\times\overline{\Gamma^c}+\sum_{i=1}^l
\p^1\times\{a_i\}$ is a tree. The map $\phi$ induces a
rational map $\phi:
\overline{Z^c}\dashrightarrow\overline{X^c}$, where
$\overline{X^c}$ denotes the projective closure of $X^c.$ We can
resolve the points of indeterminacy of this map (see e.g., \cite[Theorem 3, p. 254]{sha}):

\smallskip

\begin{center}
\begin{picture}(240,160)(-40,40)
\put(-20,117.5){\makebox(0,0)[l]{$\pi\left\{\rule{0mm}{2.7cm}\right.$}}
\put(0,205){\makebox(0,0)[tl]{$(\overline{Z^c})_m$}}
\put(0,153){\makebox(0,0)[tl]{$(\overline{Z^c})_{m-1}$}}
\put(4,105){\makebox(0,0)[tl]{$\vdots$}}
\put(0,40){\makebox(0,0)[tl]{$\overline{Z}^c$}}
\put(170,40){\makebox(0,0)[tl]{$\overline{X^c}$}}
\put(80,50){\makebox(0,0)[tl]{$\phi$}}
\put(100,140){\makebox(0,0)[tl]{$\phi '$}}
\put(10,70){\makebox(0,0)[tl]{$\pi_1$}}
\put(10,130){\makebox(0,0)[tl]{$\pi_{m-1}$}}
\put(10,180){\makebox(0,0)[tl]{$\pi_m$}}
\put(5,190){\vector(0,-1){30}} \put(5,140){\vector(0,-1){30}}
\put(5,80){\vector(0,-1){33}}
\multiput(20,35)(8,0){17}{\line(1,0){5}}
\put(157,35){\vector(1,0){10}} \put(20,200){\vector(1,-1){150}}
\end{picture}
\end{center}

\smallskip
Note that we can first resolve the real points of indeterminacy. After this process
the variety $H:=\pi^{-1}(\overline{Z})$ still has a structure of a real variety. 
Further we will call all points which are over $\overline{Z}$  real points.
Note that  there is a Zariski open neighborhood 
$U\subset \overline{Z^c}$ of $\overline{Z}$ such that the on $\pi^{-1}(U)$ we have the operation of complex conjugation of points.
Moreover,  $\R\times\Gamma\subset H$. 

Let
$Q:=(\overline{Z^c})_m\cap \phi'^{-1}(X^c).$ Then the map $\phi':
Q\rightarrow X^c$ is proper. Moreover, $Q=(\overline{Z}^c)_m\setminus
\phi'^{-1}(\overline{X^c}\setminus X^c).$ The divisor
$D_1=\phi'^{-1}(\overline{X^c}\setminus X^c)$ is connected  as the
complement of a semi-affine variety $\phi'^{-1}(X^c)$  (for
details see \cite[Lemma~4.5]{jel1}). Note that  the divisor
$D'=\pi^*(D)$ is a tree. Hence the divisor $D_1\subset D'$ is also
a tree.

Note that the map $f'=f\circ\phi'$ is determined on the set $Q^r:=H\cap Q$ and now the mapping $\phi':Q^r\to X$ is proper. 
The mapping $f'$  has a natural extension to the set 
$Q$ and we will consider the regular complex  map $f': Q\rightarrow\C^m.$ 
This map
induces a rational map from $P:=(\overline{Z}_n)^c$ to
$\p^m(\C).$ As before we can resolve its points of indeterminacy:

\smallskip

\begin{center}
\begin{picture}(240,160)(-40,40)
\put(-20,117.5){\makebox(0,0)[l]{$\psi\left\{\rule{0mm}{2.7cm}\right.$}}
\put(0,205){\makebox(0,0)[tl]{$P_k$}}
\put(0,153){\makebox(0,0)[tl]{$P_{k-1}$}}
\put(4,105){\makebox(0,0)[tl]{$\vdots$}}
\put(0,40){\makebox(0,0)[tl]{$P$}}
\put(170,40){\makebox(0,0)[tl]{${\Bbb P^m(\C)}$}}
\put(80,50){\makebox(0,0)[tl]{$f'$}}
\put(100,140){\makebox(0,0)[tl]{$F$}}
\put(10,70){\makebox(0,0)[tl]{$\rho_1$}}
\put(10,130){\makebox(0,0)[tl]{$\rho_{k-1}$}}
\put(10,180){\makebox(0,0)[tl]{$\rho_k$}}
\put(5,190){\vector(0,-1){30}} \put(5,140){\vector(0,-1){30}}
\put(5,80){\vector(0,-1){33}}
\multiput(20,35)(8,0){17}{\line(1,0){5}}
\put(157,35){\vector(1,0){10}} \put(20,200){\vector(1,-1){150}}
\end{picture}
\end{center}

\smallskip

Again we can first resolve the real points of indeterminancy. After this process
the variety $\psi^{-1}(H)$ still has the structure of a real variety. In particular there is a Zariski open neighborhood 
$V\subset P_k$ of $\psi^{-1}(H)$ such that on $V$ we have the operation of complex conjugation of points.

Note that  the divisor $D_1'=\psi^*(D_1)$ is a tree.
Let $\infty'\times\overline{\Gamma}$ denote the proper transform of
$\infty\times\overline{\Gamma}$. It is an easy observation that
$F(\infty'\times\overline{\Gamma})\subset \pi_\infty$, where
$\pi_\infty$ denotes the hyperplane at infinity of $\p^m(\C)$. Now
$S_{f'}= F(D_1'\setminus F^{-1}(\pi_\infty)).$ The curve
$L=F^{-1}(\pi_\infty)$ is connected (by the same argument as above).
Now by Proposition \ref{acykl}  every irreducible
curve $l\subset D'_1$ (note that necessarily $l\cong \p^1(\C)$)
which does is not contained in $L$ has at most one  point in common with
$L$. Let $R\subset S_{f'}$ be an irreducible component. Hence $R$
is a curve. There is a curve $l\subset D_1',$ which has exactly one
 point in common with $L$, such that $R=F(l\setminus L).$ If $l$ is
given by blowing up  a real point, then $L$ also has a real
point in common  with $l$ (because otherwise there are  two conjugate
common points of $l$ and $L$). When we restrict to the real model
$l^r$ of $l$ we have  $l^r\setminus L\cong \R.$ Hence if we
restrict our considerations only to the real points and to the set
$Q^r$,   we see that the set $S$ of non-proper
points of the map $f'|_{Q^r}$ is a union of polynomially parametric curves
$F(l^r\setminus L), \ l\subset D_1', \psi(l)\subset H$. Of
course $a\in S \subset S_f.$ Similarly the set $S_{f\circ\phi}$
 is a union of polynomially parametric curves $F(l^r\setminus
L), \ l\subset \psi^*(D'), \pi(\psi(l))\subset \overline{Z}$. Hence we can say that every ``irreducible
component'' of the set of non-proper points of $f'|_{Q^r}$ is also
an `irreducible' component of $S_{f\circ\phi}$. Moreover $a\in
S_{f'|_{Q^r}}\subset S_f.$ In particular there is a real parametric curve $F(l^r\setminus L)\subset S_f$
 which contains the point $a$ and which is covered by curves lying in  $S_{f\circ \phi}$. Now we can finish the 
 proof by invoking  Theorem
\ref{cxw1} and Lemma \ref{xx} below.
\end{proof}

\begin{lemma}\label{xx}
Let $\psi:\R\to\R^m$ be a polynomially parametric curve. If there exist  polynomially parametric curves $\phi_i: \R\rightarrow\R^m,
\ i=1,...,n$,
of degree at most $d$ with $\psi(\R)\subset \bigcup^n_{i=1} \phi_i(\R)$, then $\psi(\R)$ has degree of $\R$-uniruledness 
at most $2d$.
\end{lemma}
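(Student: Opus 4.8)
The plan is to prove the \emph{stronger} statement that $\psi(\R)$ is itself the image of a single polynomial map of degree at most $2d$, which at once gives the bound on the degree of $\R$-uniruledness. Since $\psi$ is a non-constant polynomial map it is proper, so $\psi(\R)$ is a closed, connected, unbounded semialgebraic curve, Zariski dense in an irreducible affine curve $C=\overline{\psi(\R)}=\overline{\phi(\R)}$, which is rational because it is dominated by the affine line. The whole point will be to produce a single unbounded interval $I\subseteq\R$ (a half-line, or all of $\R$) with $\phi(I)=\psi(\R)$. Granting this, the reparametrization $s\mapsto\phi(c+s^2)$, where $c$ is the finite endpoint of $I$ and the sign of $s^2$ is chosen according to the direction of $I$ (taking $\eta=\phi$ when $I=\R$), is a non-constant polynomial map of degree at most $2d$ whose image is exactly $\phi(I)=\psi(\R)$. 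The factor $2$, and its necessity, come precisely from this step: a half-line is the image of $\R$ under the degree-two map $s\mapsto c+s^2$ but under no affine map, exactly as in the example $\phi(t)=(t,0)$, $\psi(t)=(t^2,0)$.

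To construct $I$ I will pass to the normalization. Let $n\colon\tilde C\to C$ be the normalization of $C$, taken over $\R$; since the affine line is smooth, $\phi$ and $\psi$ lift uniquely to morphisms $\tilde\phi,\tilde\psi\colon\R\to\tilde C(\R)$ with $n\circ\tilde\phi=\phi$ and $n\circ\tilde\psi=\psi$ (the lifts are real by uniqueness of the lift over $\C$). The real points $\tilde C(\R)$ form a one-dimensional manifold, a disjoint union of lines and circles; a non-constant polynomial map from $\R$ cannot have image in a compact (circle) component, so $\tilde\phi(\R)$ and $\tilde\psi(\R)$ each lie in a non-compact component $\cong\R$. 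Being proper images of the connected set $\R$, each of $\tilde\phi(\R)$ and $\tilde\psi(\R)$ is a single closed unbounded interval. Finally, a generic point of $\psi(\R)\subseteq\phi(\R)$ is a smooth point of $C$ with a unique preimage under $n$, hence a unique lift lying in both $\tilde\phi(\R)$ and $\tilde\psi(\R)$; this forces $\tilde\phi$ and $\tilde\psi$ into the same component $M\cong\R$ and gives $\tilde\psi(\R)\subseteq\tilde\phi(\R)$.

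This containment of intervals in $M$ is the crux, and it is exactly what fails for an arbitrary semialgebraic subset of $\phi(\R)$: it says that upstairs $\psi(\R)$ comes from a single sub-half-line of $\tilde\phi(\R)$, with no extra isolated compact sub-arcs. Write $\tilde\psi(\R)$ as the half-line of $M$ with finite endpoint $\alpha$ (or $\tilde\psi(\R)=M$). Since $\tilde\phi$ is proper and $\alpha\in\tilde\phi(\R)$, the fibre $\tilde\phi^{-1}(\alpha)$ is nonempty and compact; let $c$ be its largest or smallest element, chosen according to which end of $M$ the half-line $\tilde\psi(\R)$ opens toward. By the intermediate value theorem applied to the proper continuous map $\tilde\phi$, the image of $I=[c,\infty)$ (resp. $(-\infty,c]$) is the connected unbounded subset of $M$ starting at $\alpha$, that is, exactly $\tilde\psi(\R)$. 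Applying $n$ gives $\phi(I)=n(\tilde\phi(I))=n(\tilde\psi(\R))=\psi(\R)$; when $\tilde\psi(\R)=M$ one takes $I=\R$.

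With $I$ in hand the proof concludes as in the first paragraph, exhibiting $\psi(\R)$ as a parametric curve of degree at most $2d$; in particular it passes through each of its own points, so its degree of $\R$-uniruledness is at most $2d$. The main obstacle is the structural fact of the second and third paragraphs, namely controlling the real normalization and reducing everything to a single interval on a line-component $M$. It is here that one genuinely uses that $\psi$ is a polynomial parametrization and not merely an arbitrary subset of $\phi(\R)$: were $\psi(\R)$ allowed to contain an isolated compact sub-arc meeting the rest only at a node, the interior points of that arc would lie on no non-constant parametric curve contained in $\psi(\R)$ at all — any such curve lifts to $\tilde C$, where the sub-arc is a separate bounded component, so the lift, and hence the curve, would be constant — and the set would fail to be $\R$-uniruled.
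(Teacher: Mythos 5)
Your proof is correct and reaches the same stronger conclusion as the paper's own argument --- that $\psi(\R)$ is itself the image of a single polynomial map of degree at most $2d$ --- but by a genuinely different route. The paper is algebraic: by the L\"uroth Theorem, $\R(\phi_1,\dots,\phi_m)=\R(g)$ for some $g\in\R(t)$, so $\phi=f\circ g$ with $f$ a birational parametrization of the curve $C$; the unique point at infinity of the Zariski closure of $\phi(\R)$ is then used to arrange that $g$ and the $f_i$ are polynomials with $\deg f\le d$, and finally $f$ composed with a suitable polynomial of degree one or two parametrizes $\phi(\R)$, respectively $\psi(\R)$, in degree at most $2\deg f\le 2d$. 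You instead pass to the abstract real normalization $n:\tilde C\to C$ and argue topologically: properness of non-constant polynomial maps $\R\to\R^m$, the classification of the components of $\tilde C(\R)$ into lines and circles, and an intermediate-value argument produce a half-line (or line) $I\subset\R$ with $\phi(I)=\psi(\R)$, after which $s\mapsto\phi(c\pm s^2)$ finishes. Both proofs rest on the same two pillars --- a smooth model of the rational curve $C$, and the degree-two trick that a closed half-line is a polynomial image of $\R$ --- but your smooth model is handled with real topology, while the paper's (the L\"uroth generator) is handled with field theory and behaviour at infinity. The paper's route is shorter granted L\"uroth and yields the minimal parametrization $f$, whose degree may be much smaller than $d$; yours avoids L\"uroth and the point-at-infinity normalization entirely, and your closing paragraph isolates cleanly why the hypothesis that $\psi(\R)$ is a parametric curve, rather than an arbitrary closed connected semialgebraic subset of $\phi(\R)$, is essential. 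Two of your steps deserve one explicit extra line each: the inclusion $\tilde\psi(\R)\subseteq\tilde\phi(\R)$ holds a priori only for the cofinitely many parameters $t$ with $\psi(t)\notin\sing(C)$, and one must then invoke closedness of $\tilde\phi(\R)$ (which follows from properness of $\tilde\phi$) to get it everywhere; and in the intermediate-value step the choice between $c=\max\tilde\phi^{-1}(\alpha)$ with $I=[c,\infty)$ and $c=\min\tilde\phi^{-1}(\alpha)$ with $I=(-\infty,c]$ is dictated by the behaviour of $\tilde\phi$ near the two ends of its source rather than by $\tilde\psi$ alone --- at least one of the two choices must work, since otherwise $\tilde\phi(\R)$ would be bounded on the side of $\alpha$ toward which $\tilde\psi(\R)$ opens, contradicting $\tilde\psi(\R)\subseteq\tilde\phi(\R)$.
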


\begin{proof}
Indeed, let $\psi(t)=(\psi_1(t),...,\psi_m(t))$ and let $X$ denote the Zariski closure
of $\psi(\R).$ Consider the
field $L=\R(\psi_1,...,\psi_m)$. By the L\"uroth Theorem
there exists a rational function $g(t)\in \R(t)$ such that
$L=\R(g(t)).$ In particular there exist  $f_1,..., f_m\in \R(t)$
such that $\psi_i(t)=f_i(g(t))$ for $i=1,...,m.$ In fact, we have
two induced maps $\overline{f} : \Bbb P^1(\R)\rightarrow \overline{X}\subset \Bbb
P^m(\R)$ and $\overline{g}: \Bbb P^1(\R)\rightarrow\Bbb P^1(\R).$ Here $\overline{X}$ denotes  the projective
closure of $X.$
Moreover, $\overline{f}\circ \overline{g}=\overline{\psi}.$ Let
$A_\infty$ denote the unique point at infinity 
of $\overline{X}$ and let
$\infty=\overline{f}^{-1}(A_\infty).$ Then
$\#\overline{g}^{-1}(\infty)=\infty$,  i.e., $g\in \R[t].$
Similarly $f_i\in \R[t].$ Now if deg $g=1$ then
$f:\R\rightarrow\R^n$ covers the whole $\phi(\R)$.  Otherwise we can
compose $f$ with a suitable polynomial of degree two to obtain the whole
$\psi(\R)$ as image.

Now let $\phi_i:=\phi$ be a curve which has infinitely many  points in common with $\psi(\R).$
In the same way as above we have $\phi=f'\circ g'$, where $f':\R\to X$ is a birational and polynomial mapping and $g':\R\to \R$ is a polynomial mapping.
In particular $f^{-1}\circ f : \R\to\R$ is a polynomial automorphism, i.e., $f(t)=f'(at+b), \ a\in \R^*, b\in \R.$
Hence we can compose $f'$ with a
suitable polynomial of degree one or two to obtain the whole
$\psi(\R)$ as  image. In any case $\psi(\R)$ has a
parametrization of degree bounded by  $2$ deg $f\le 2d.$
\end{proof}

\begin{corollary}\label{multc3}
Let $X$ be a closed algebraic set which is $\R$-uniruled and let $f:X\rightarrow \R^m$ be a generically
finite polynomial map. Then every connected component of the set $S_f$ is unbounded.
\end{corollary}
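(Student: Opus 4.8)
The plan is to deduce the statement from Theorem~\ref{multc1} together with two elementary features of real parametric curves: each such curve is connected, and each is unbounded. Since $X$ is $\R$-uniruled, Theorem~\ref{multc1} tells us that $S_f$ is $\R$-uniruled as well. By Definition~\ref{R-uniruleddef} and condition $(1)$ of Proposition~\ref{r}, this means that through every point of $S_f$ there passes a parametric curve contained in $S_f$, i.e.\ the image $l_x=h(\R)$ of some non-constant polynomial map $h:\R\to S_f$. If $S_f$ is empty there is nothing to prove, so I assume it is nonempty.

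First I would fix a connected component $C$ of $S_f$ (recall that $S_f$ is closed and semialgebraic, so it has finitely many connected components, each closed and semialgebraic) and choose a point $x\in C$. Applying $\R$-uniruledness at $x$ produces a parametric curve $l_x\subset S_f$ through $x$. I would then observe that $l_x$ lies entirely in $C$: indeed $l_x$ is the continuous image of the connected set $\R$, hence connected, and a connected subset of $S_f$ that meets $C$ at $x$ must be contained in the connected component $C$.

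Finally I would show that $l_x$ is unbounded, which forces $C$ to be unbounded. Here the key point is that $h$ is non-constant by the very definition of a parametric curve, so at least one coordinate $h_i$ is a non-constant polynomial; then $\Vert h(t)\Vert\ge |h_i(t)|\to\infty$ as $|t|\to\infty$, whence $l_x=h(\R)$ is unbounded. Since $l_x\subset C$, the component $C$ is unbounded, as required.

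The substantive input is Theorem~\ref{multc1}; once $S_f$ is known to be $\R$-uniruled, the remaining steps are routine point-set topology, and I do not expect a genuine obstacle. The only place that deserves a word of care is that both the connectedness and the unboundedness of $l_x$ rest on the precise definition of a real parametric curve as the image of a \emph{non-constant} polynomial map $\R\to S_f$, so the argument amounts to invoking this definition correctly rather than to any delicate estimate.
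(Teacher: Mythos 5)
Your proposal is correct, and it is essentially the argument the paper intends: Corollary~\ref{multc3} is stated without proof precisely because it follows from Theorem~\ref{multc1} in the way you describe, using that a real parametric curve (image of a non-constant polynomial map $\R\to\R^m$) is connected and unbounded. Your write-up just makes this implicit reasoning explicit, with no gaps.
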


\section{An application of the real field case}\label{real2}

As an application we give a real counterpart of a  theorem of Bia\l ynicki-Birula \cite{bial}.

\begin{theorem}\label{glowne}
Let $G$ be a real, non-trivial, connected, unipotent group, which acts effectively and polynomially on a closed algebraic $\R$-uniruled set $X\subset \R^n$.
Then the set $Fix(G)$ of fixed points of this action, is also $\R$-uniruled. In particular, it has no isolated points.
\end{theorem}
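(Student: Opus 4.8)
The plan is to reduce to a one-parameter subgroup and then to produce, through (almost) every fixed point, a parametric curve contained in $Fix(G)$ by a limiting argument of the same flavour as the proof of Theorem \ref{cn}, using Theorem \ref{multc1} and Lemma \ref{xx} to organise the degrees.

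First I would reduce to the case $G=(\R,+)=\mathbb{G}_a$. Since $G$ is non-trivial, connected and unipotent, its centre is non-trivial and contains a one-parameter subgroup $H\cong(\R,+)$ that is normal in $G$. The set $Fix(H)$ is again closed and semialgebraic, the quotient $G/H$ is unipotent of strictly smaller dimension and acts polynomially on $Fix(H)$, and $Fix(G)$ equals the fixed-point set of this induced $G/H$-action on $Fix(H)$. Hence, by induction on $\dim G$, it suffices to treat $G=\mathbb{G}_a$; the induction step uses exactly the base-case output, that $Fix(H)$ is a closed semialgebraic $\R$-uniruled set.

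For the base case write $g_t$ for the action of $t\in\R$, so that each orbit map $t\mapsto g_t x$ is polynomial of degree at most a fixed $d_2$, and note that $Fix(\mathbb{G}_a)=Fix(g_1)$, since the stabiliser of a point is an algebraic subgroup of $\mathbb{G}_a$. The key map is
$$h:\R\times X\ni(t,x)\longmapsto (x,\,g_t x)\in \R^n\times\R^n,$$
which is generically injective, hence generically finite, and whose source $\R\times X$ is an $\R$-uniruled cylinder. Applying Theorem \ref{multc1} to $h$ shows that $S_h$ is $\R$-uniruled, and Corollary \ref{multc3} gives that its connected components are unbounded. Using the real analogue of Proposition \ref{topo} furnished by Proposition \ref{r} one checks that $\{(p,p):p\in Fix(\mathbb{G}_a)\}\subset S_h$, since for a fixed point $p$ one may take $x_k=p$ and $t_k\to\infty$.

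The main obstacle is to descend from $S_h$ to $Fix(\mathbb{G}_a)$ itself, because $S_h$ is in general strictly larger than the diagonal copy of the fixed locus. To overcome this I would build the covering curves inside $Fix$ directly: given $p\in Fix$, pick, using that $X$ is $\R$-uniruled of degree at most $d_1$, a parametric curve $\gamma$ of degree at most $d_1$ through $p$; form the curves $c_t(s)=g_t(\gamma(s))$ of degree at most $d_1d_2$; normalise their coefficient vectors to norm one, reparametrise $s\mapsto\lambda_k s$ along a subsequence $t_k\to\infty$ exactly as in Theorem \ref{cn}, and extract a non-constant limit parametric curve $\ell$ with $\ell(0)=p$. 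Since $g_\tau\circ c_t=c_{t+\tau}$, the shift by any fixed $\tau$ is absorbed in the limit, so $g_\tau(\ell(s))=\ell(s)$ for all $\tau$ and $s$; hence $\ell(\R)\subset Fix$. This yields a parametric curve inside $Fix$ through a dense set of its points, and Proposition \ref{r} together with the degree accounting of Lemma \ref{xx} shows that $Fix$ is $\R$-uniruled (of degree at most $2d_1d_2$). The delicate points, as in Theorem \ref{cn}, are the dichotomy between a finite and an infinite value of $\lambda=\lim\lambda_k$, which is what guarantees that $\ell$ is non-constant, that is, does not collapse to the point $p$, while remaining $g_\tau$-invariant. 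The final clause (no isolated points) is then immediate, since an $\R$-uniruled set is covered by non-constant curves.
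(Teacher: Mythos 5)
Your proposal is correct in substance, and its key mechanism is genuinely different from the paper's. Both proofs reduce to $G=G_a$ by induction on $\dim G$ (the paper inducts on a codimension-one normal subgroup $G_{n-1}$ and applies the base case to the quotient $G_a=G/G_{n-1}$ acting on $Fix(G_{n-1})$; you induct on the quotient $G/H$ of a central one-parameter subgroup $H$, which also works, but note the induced $G/H$-action on $Fix(H)$ need not be effective, so you must first divide $G/H$ by the kernel of that action --- the paper's arrangement sidesteps this because proper algebraic subgroups of $G_a$ are trivial). After the reduction the routes diverge. The paper stays inside its $S_f$-theory: for $a\in Fix(G)$ and a parametric curve $L\subset X$ through $a$ it forms $\Phi:L\times G\ni (x,g)\mapsto\phi(g,x)\in X$, notes $a\in S_\Phi$ because the fibre over $a$ is infinite, decomposes $S_\Phi$ into parametric curves via the machinery in the proof of Theorem \ref{multc1}, uses $G$-invariance of $S_\Phi$ plus connectedness of $G$ to make each of these curves $G$-invariant, and finally a complexification/orbit argument shows the curve through $a$ is pointwise fixed. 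You, after correctly observing that your detour through $h(t,x)=(x,g_tx)$ cannot work (the diagonal copy of $Fix(G)$ sits strictly inside $S_h$), build the curve in $Fix(G)$ directly: flow a curve $\gamma$ through $p$ by $g_{t_k}$ with $t_k\to\infty$, renormalize, and extract a limit $\ell$. This is more elementary and self-contained --- it needs neither the complexification, nor the resolution diagrams behind Theorem \ref{multc1}, nor Lemma \ref{xx} (your limit $\ell$ is already a non-constant polynomial map into $Fix(G)$ of bounded degree through $p$, so condition (1) of Proposition \ref{r} holds outright and the factor $2$ is superfluous).

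Two points need repair. The crucial assertion that ``the shift by any fixed $\tau$ is absorbed in the limit'' is the whole proof and is not automatic; it is true, but for a reason you must state. Translate $p$ to the origin and write $c_t(s)=\sum_{i\geq 1}a_i(t)s^i$, where each $a_i$ is a vector of polynomials in $t$. For a component $q$ of $a_i$ of positive degree, $q(\cdot+\tau)-q$ has strictly smaller degree, while the normalization keeps $\vert q(t_k)\vert\lambda_k^i\leq 1$; hence $\vert q(t_k+\tau)-q(t_k)\vert\,\lambda_k^i\leq \vert q(t_k+\tau)-q(t_k)\vert/\vert q(t_k)\vert\to 0$ precisely because $t_k\to\infty$. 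Therefore $\lim_k a_i(t_k+\tau)\lambda_k^i=\lim_k a_i(t_k)\lambda_k^i$, giving $g_\tau(\ell(s))=\lim_k c_{t_k+\tau}(\lambda_ks)=\ell(s)$, i.e.\ $\ell(\R)\subset Fix(G)$; this uses pointedly that the coefficients depend polynomially on $t$, and it fails if $t_k$ stays bounded. Second, the finite/infinite dichotomy for $\lambda=\lim\lambda_k$ that you import from Theorem \ref{cn} plays no role here: there it forces the limit curve into $S_f$, whereas in your construction non-constancy of $\ell$ is free, since the constant term of every $c_t$ is pinned at $p$ and the remaining coefficients are normalized to norm one. (Also, to bound $\deg_s c_t$ you need the degree of the action in the space variable, not the degree $d_2$ of the orbit maps in $t$; this affects only the constant, not the conclusion.)
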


\begin{proof} First of all let us recall that a connected unipotent group
has a normal series $$0=G_0\subset G_1\subset\dots\subset G_r=G,$$
where $G_i/G_{i-1}\cong G_a= (\R,+,0).$ By  induction on dim $G$ we
can easily reduce the problem to $G=G_a.$ Indeed,
assume the conclusion holds for $G=G_a.$ Take a unipotent group $G$
with dim $G=n$ and assume that the conclusion holds in dimension $n-1.$ There is a normal subgroup $G_{n-1}$ of dimension
$n-1$ such that $G/G_{n-1}=G_a.$ Moreover, the set
$R:=Fix(G_{n-1})$ is $\R$-uniruled by our hypothesis. Consider the
induced action of the group $G_a=G/G_{n-1}$ on $R.$ The set of
fixed points of this action is $\R$-uniruled and it coincides with
$Fix(G)$.

Hence assume that $G=G_a.$ Let $D$ be the degree of $\R$-uniruledness
of $X.$ Choose  $a \in Fix(G).$ Let $\phi: G\times X\ni
(g,x)\mapsto\phi(g,x)\in X$ be a polynomial action of $G$ on $X.$
This action also induces a polynomial action of the
complexification $G^c=(\C,+)$ of $G$ on $X^c$.  We will denote
this action by $\overline{\phi}.$ Assume that $\deg_g \phi\le d.$
By Definition \ref{R-uniruleddef} it is enough to prove that
there exists a polynomially parametric curve $S\subset Fix(G)$ passing through
$a$ of degree bounded by $dD$. Let $L$ be a polynomially parametric
curve in $X$ passing through $a.$ If it is contained in
$Fix(G)$, then the assertion is true. Otherwise consider a closed semialgebraic
surface $Y=L\times G.$ There is a natural $G-$action on $Y$: for
$h\in G$ and $y=(l,g)\in Y$ we set $h(y)=(l,hg)\in Y.$ Consider the
map
$$\Phi : L\times G\ni (x, g)\rightarrow\phi(g,x)\in X.$$ It is a generically finite
polynomial map. Observe that it is $G$-invariant, which means
$\Phi(gy)=g\Phi(y).$ This implies that the set $S_\Phi$ of points
at which $\Phi$ is not finite is $G$-invariant.
Indeed, it is enough to show that the complement of this set is
$G$-invariant. Let $\Phi$ be finite at $x\in X.$ Then 
there is an open neighborhood $U$ of $x$ such that 
$\Phi : \Phi^{-1}(U)\rightarrow U$ is finite. Now we have the following
diagram:

\smallskip

\begin{center}
\begin{picture}(240,120)(-20,40)
\put(140,160){\makebox(0,0)[tl]{$\Phi^{-1}(gU)=g\Phi^{-1}(U)$}}
\put(20,160){\makebox(0,0)[tl]{$\Phi^{-1}(U)$}}
\put(30,40){\makebox(0,0)[tl]{$U$}}
\put(180,40){\makebox(0,0)[tl]{$gU$}}
\put(190,100){\makebox(0,0)[tl]{$\Phi$}}
\put(40,100){\makebox(0,0)[tl]{$\Phi$}}
\put(95,50){\makebox(0,0)[tl]{$g$}}
\put(95,170){\makebox(0,0)[tl]{$g$}}
\put(33,145){\vector(0,-1){100}} \put(45,35){\vector(1,0){125}}
\put(60,155){\vector(1,0){75}} \put(183,145){\vector(0,-1){100}}
\end{picture}
\end{center}

\smallskip

\noindent This shows that if $\Phi$ is finite
over $U$, then it is finite over $gU.$ In particular this implies
that the set $S_\Phi$ is $G$-invariant. Let  $S_\Phi=S_1\cup ...\cup S_k$ be a decomposition of $S_\Phi$ into polynomially parametric
curves (see  Corollary \ref{cxw2}). Since
 $S_\Phi$ is $G$-invariant,  each 
curve $S_i$ is also $G$-invariant. Note that the point $a$ belongs to $S_\Phi$, because the fiber over $a$ has infinitely many
points. We can assume that $a\in S_1.$ Let us note that  $a$ is also a fixed point for $G^c.$ Let $x\in S_1;$ we want
to show that $x\in Fix(G).$  Indeed, the set $S_1^c$ is also $G^c-$invariant and if $x\not\in Fix(G)$ then  $G^c.x=S_1^c$ and $a$
would be in the orbit of $x$, which is a contradiction. Hence $S_1\subset
Fix(G)$ and we conclude by Theorem \ref{cxw1}.
\end{proof}

\begin{corollary}\label{glowne2}
Let $G$ be a real, non-trivial, connected, unipotent group which acts effectively and polynomially on a closed algebraic set $X\subset \R^n$.
If the set $Fix(G)$ of fixed points of this action, is nowhere dense in $X$, then it is $\R$-uniruled.
\end{corollary}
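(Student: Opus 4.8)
The plan is to reduce the Corollary to Theorem \ref{glowne} by showing that the hypothesis ``$Fix(G)$ is nowhere dense'' already forces the ambient set $X$ itself to be $\R$-uniruled. Once this is established, $X$ becomes a closed semialgebraic $\R$-uniruled set carrying an effective polynomial action of the non-trivial connected unipotent group $G$, so Theorem \ref{glowne} applies verbatim and yields that $Fix(G)$ is $\R$-uniruled, which is exactly the assertion.

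To carry this out, first I would record that $Fix(G)=\{x\in X:\phi(g,x)=x\ \forall g\in G\}$ is a closed semialgebraic subset of $X$, so that its being nowhere dense means precisely that $U:=X\setminus Fix(G)$ is \emph{dense} in $X$. The goal then becomes: produce through every point of $U$ a parametric curve contained in $X$ whose degree is bounded by a constant independent of the point, and invoke the implication $(2)\Rightarrow(1)$ of Proposition \ref{r}. The curves are supplied by the group action. Since a connected unipotent group over a field of characteristic zero is, via the exponential map $\exp:\mathfrak{g}\to G$, polynomially isomorphic to its Lie algebra, every one-parameter subgroup has the form $t\mapsto\exp(t\xi)$ with $\xi\in\mathfrak{g}$, and it is polynomially parametrized. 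Fix $x\in U$. Because $x\notin Fix(G)$, there is $g\in G$ with $g\cdot x\neq x$; writing $g=\exp(\xi)$ produces a one-parameter subgroup $H_x=\{\exp(t\xi)\}\cong G_a$ that does not fix $x$. Hence the orbit map
$$\gamma_x:\R\ni t\longmapsto\phi(\exp(t\xi),x)\in X$$
is a non-constant polynomial map whose image is a parametric curve lying in $X$ and passing through $x$ (at $t=0$). In exponential coordinates, where $\exp(t\xi)$ is linear in $t$, the degree of $\gamma_x$ is bounded by $\deg_g\phi$, a constant independent of $x$. Thus $U$ is a dense set through each point of which passes a parametric curve in $X$ of uniformly bounded degree, so Proposition \ref{r} gives that $X$ is $\R$-uniruled; Theorem \ref{glowne} then finishes the proof.

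The main obstacle is the middle step. One must guarantee, for \emph{every} non-fixed point simultaneously, both the existence of a moving one-parameter subgroup and a \emph{uniform} degree bound for the resulting orbit curves, since Proposition \ref{r} requires the bound $d$ to be fixed in advance. This is exactly where unipotence of $G$ and the polynomiality of the exponential map are essential: they convert the abstract fact that a non-fixed point is moved by \emph{some} element of $G$ into an explicit polynomially parametrized curve of controlled degree. Note that choosing a single fixed copy of $G_a\le G$ would not suffice, as its fixed-point set could fail to be nowhere dense; the one-parameter subgroup $H_x$ must be allowed to depend on $x$, and it is the uniform degree bound that makes this dependence harmless.
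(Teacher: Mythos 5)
Your proof is correct and matches the argument the paper leaves implicit: since $Fix(G)$ is closed and nowhere dense, orbits of (point-dependent) one-parameter subgroups give parametric curves in $X$ of uniformly bounded degree through the dense set of non-fixed points, so Proposition \ref{r} $(2)\Rightarrow(1)$ makes $X$ itself $\R$-uniruled, and Theorem \ref{glowne} then applies. Your emphasis on the polynomial exponential map supplying the uniform degree bound, and on letting the one-parameter subgroup depend on the point, is exactly the justification needed for this reduction.
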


\begin{corollary}\label{glowne3}
Let $G$ be a real, non-trivial, connected, unipotent  group which acts effectively and polynomially on a connected smooth closed algebraic variety $X\subset\R^n$.
Then the set $Fix(G)$ is $\R$-uniruled.
\end{corollary}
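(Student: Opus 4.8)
The plan is to reduce to the one-dimensional case $G=G_a=(\R,+,0)$ by the normal series of a unipotent group, then to build a $G$-invariant ruling through an arbitrary fixed point using the non-properness set of an auxiliary map. First I would recall that a connected unipotent group admits a filtration $0=G_0\subset G_1\subset\dots\subset G_r=G$ with successive quotients isomorphic to $G_a$. Arguing by induction on $\dim G$, I would observe that $R:=Fix(G_{n-1})$ is $\R$-uniruled by the inductive hypothesis applied to the normal subgroup $G_{n-1}$, and that $G/G_{n-1}\cong G_a$ acts on the closed semialgebraic set $R$; since $Fix(G)$ equals the fixed-point set of this induced $G_a$-action on $R$, the base case $G=G_a$ suffices.

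For the base case, fix $a\in Fix(G)$ and a parametric curve $L\subset X$ through $a$ of degree at most $D$ (the degree of $\R$-uniruledness of $X$). If $L\subset Fix(G)$ we are done, so assume not. The key construction is the map
\[
\Phi: L\times G\ni (x,g)\rightarrow\phi(g,x)\in X,
\]
which is generically finite and $G$-invariant in the sense $\Phi(gy)=g\Phi(y)$ for the natural $G$-action $h(l,g)=(l,hg)$ on $Y=L\times G$. From the $G$-invariance I would deduce that the non-properness set $S_\Phi$ is $G$-invariant: if $\Phi$ is finite over a neighborhood $U$ of a point, then the translation diagram shows it is finite over $gU$, so the complement of $S_\Phi$ is $G$-invariant, hence so is $S_\Phi$. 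Since $L\times G$ is a closed semialgebraic cylinder with $\deg_t\Phi$ controlled, Theorem \ref{cxw1} gives that $S_\Phi$ is $\R$-uniruled and (by the decomposition used in the proof of Theorem \ref{multc1}) it decomposes into finitely many parametric curves $S_\Phi=S_1\cup\dots\cup S_k$; by $G$-invariance each $S_i$ is $G$-invariant.

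The heart of the argument, and the step I expect to be the main obstacle, is showing that the component containing $a$ actually lies in $Fix(G)$. I would first note $a\in S_\Phi$ because the fiber $\Phi^{-1}(a)$ contains the whole copy $\{a\}\times G$ (as $a$ is fixed), hence is infinite; say $a\in S_1$. Passing to the complexification $G^c=(\C,+)$ acting on $X^c$, the point $a$ is also fixed by $G^c$, and $S_1^c$ is $G^c$-invariant. Now for $x\in S_1$, if $x\notin Fix(G)$ then its orbit $G^c.x$ is one-dimensional and irreducible, hence equals the irreducible curve $S_1^c$; but then $a$ would lie in the orbit of $x$, contradicting that $a$ is fixed (a fixed point cannot lie on a nontrivial $G_a$-orbit). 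Therefore $S_1\subset Fix(G)$, giving a parametric curve through $a$ inside $Fix(G)$ whose degree is bounded by $\max(d,D)$ via Theorem \ref{cxw1}. The delicate point to get right is the orbit–closure argument over $\C$: one must ensure the $G^c$-orbit of a non-fixed point is precisely the irreducible curve $S_1^c$ and not a proper subset, which is why working with the complexification (where orbits of $G_a$ are affine lines) rather than directly over $\R$ is essential.
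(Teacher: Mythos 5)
There is a genuine gap: what you have written is a proof plan for Theorem \ref{glowne}, but the statement at hand is Corollary \ref{glowne3}, and the two differ in exactly the respect that carries all the content of the corollary. In the corollary, $X$ is a connected Nash submanifold of $\R^n$ which is \emph{not} assumed to be $\R$-uniruled. Yet your argument begins by fixing ``a parametric curve $L\subset X$ through $a$ of degree at most $D$ (the degree of $\R$-uniruledness of $X$)''; under the corollary's hypotheses neither $D$ nor $L$ is available, so the construction of $\Phi:L\times G\to X$ never gets off the ground. What the corollary requires is a bridge from ``connected Nash submanifold with an effective polynomial $G$-action'' to the hypotheses of Theorem \ref{glowne} (or of Corollary \ref{glowne2}), after which the theorem — whose proof is what you reproduced, faithfully and in line with the paper — can be invoked as a black box. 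That bridge is entirely absent from your proposal.

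The missing argument is this. Pick a one-parameter subgroup $G_1\cong G_a$ of $G$ (it exists by the normal series of a unipotent group), which still acts effectively. For $g\neq e$, the map $x\mapsto\phi(g,x)$ is a polynomial, hence Nash-analytic, self-map of $X$ different from the identity; by the identity principle on the \emph{connected} analytic manifold $X$, its fixed-point set is closed with empty interior. Hence $Fix(G_1)$ is nowhere dense, and through every point $x$ of the dense set $X\setminus Fix(G_1)$ the orbit map $t\mapsto\phi(t,x)$ is a non-constant polynomial curve in $X$ of degree at most $\deg_g\phi$. By Proposition \ref{r} ((2)$\Rightarrow$(1)) this makes $X$ itself $\R$-uniruled, and then Theorem \ref{glowne} yields that $Fix(G)$ is $\R$-uniruled; equivalently, one can note that the same identity-principle argument shows $Fix(G)$ is nowhere dense and route through Corollary \ref{glowne2}. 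Note that connectedness and effectiveness are used essentially here — drop either and $Fix(G)$ can contain isolated points — and neither hypothesis plays any role in your write-up, which is the clearest sign that the statement actually being proved is not the one asked.
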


\end{document}